\newtheorem{thm}{Theorem}[section]
\newtheorem{prop}[thm]{Proposition}
\newtheorem{lem}[thm]{Lemma}
\newtheorem{cor}[thm]{Corollary}
\newtheorem{hyp}[thm]{Hypothesis}
\theoremstyle{definition}
\newtheorem{defin}[thm]{Definition}
\theoremstyle{remark}
\newtheorem{rem}[thm]{Remark}
\newtheorem{ex}[thm]{Example}
\numberwithin{equation}{section}
\newcommand{\cB}{\mathcal{B}}
\newcommand{\cF}{\mathcal{F}}
\newcommand{\cI}{\mathcal{I}}
\newcommand{\cL}{\mathcal{L}}
\newcommand{\cP}{\mathcal{P}}
\newcommand{\EE}{\mathbb{E}}
\newcommand{\II}{\mathbb{I}}
\newcommand{\PP}{\mathbb{P}}
\newcommand{\RR}{\mathbb{R}}
\newcommand{\WW}{\mathbb{W}}
\newcommand{\dd}{\mathop{}\!\mathrm{d}}
\newcommand{\abs}[1]{\left|#1\right|}
\newcommand{\pd}{\partial}
\newcommand{\ep}{\varepsilon}
\renewcommand{\ge}{\geqslant}
\renewcommand{\le}{\leqslant}
\let\oldtocsection=\tocsection
\let\oldtocsubsection=\tocsubsection
\let\oldtocsubsubsection=\tocsubsubsection
\renewcommand{\tocsection}[2]{\hspace{0em}\oldtocsection{#1}{#2}}
\renewcommand{\tocsubsection}[2]{\hspace{1em}\oldtocsubsection{#1}{#2}}
\renewcommand{\tocsubsubsection}[2]{\hspace{2em}\oldtocsubsubsection{#1}{#2}}
\numberwithin{thm}{section}
\title[Regularisation by noise]{Weak Existence for Degenerate Distribution Dependent SDEs with multiplicative Noise 
\\
- a pathwise regularization approach} 
\author{Fabian n. Harang \and Chengcheng Ling \and Peter H. C. Pang}
\keywords{pathwise regularisation-by-noise, McKean--Vlasov equation, weak solutions}
\subjclass[2020]{60H50,60H10,60H15,60L90,35K65,35K59}
\thanks{FH gratefully acknowledge the support of the Center for Advanced Studies (CAS) in Oslo, which funded the Signatures for Images project during the academic year 2023/2024, enabling the writing of this article. CL is supported by the Deutsche Forschungsgemeinschaft (DFG) - Projektnummer 563883019. PP is supported by the Research Cou. ncil of Norway project INICE, project no. 301538.}
\begin{document}
\begin{abstract} 
We establish the existence of weak solutions to a class of  distribution-dependent stochastic differential equations (DDSDEs) with possibly degenerate multiplicative noise and singular coefficients. 
Extending the weak existence techniques introduced by Bechtold \& Hofmanova \cite{bechtold} to a distribution-dependent framework, we utilize pathwise averaging and local-time decomposition methods to show how irregular noise effectively regularizes 
analytical challenges associated with degeneracies in stochastic systems.
\end{abstract}
\maketitle
\setcounter{tocdepth}{1}

\section{Introduction}
\label{sec:intro}

Dynamics subject to noise often exhibit a surprising phenomenon known as {\em regularisation by noise}, whereby singular or weakly well-posed equations regain (or improve) solvability once perturbed by irregular stochastic drivers. While such effects have been extensively studied in finite-dimensional stochastic differential equations (SDEs) with non-Lipschitz drifts \cite{MR336813, MR568986,  MR2820071,  MR2377011, gubicat, GG25, DGLL} etc, fewer works have tackled (McKean–Vlasov type) 
distribution-dependent SDEs (DDSDEs) under similarly rough forcing, and even fewer still allow for relaxed conditions on the coefficients in front of a multiplicative Brownian motion. The present paper establishes the existence of weak solutions to distribution-dependent SDEs driven by a multiplicative Brownian motion, in the setting where the distribution dependence is perturbed by a suitably irregular path, thus extending and combining both the recent results from \cite{bechtold} and \cite{HarangMayorcas}.

\subsection*{Motivation and contribution}

Systems of DDSDEs arise  in many contexts in 
mathematical modelling. 
In finance and economics, such equations are 
frequently used to represent individual
behaviour among a collective, describing the  influence on individual trajectories of 
expectations regarding the collective (see, for example, \cite{Wang2017,Wang2012,Nguyen2021,Bae2015}).

The present paper was motivated by the analysis of distribution dependent dynamics where each equation might interact with the distribution of all other equations. 
In particular, distribution dependent equations of the form 
\begin{equation}\label{eq:mckean}
    \dd x_t = b(t,x_t,\cL(x_t))\dd t + \sigma(t,x_t,\cL(x_t))\dd \beta_t, 
\end{equation}
where $\{\beta_t\}_{t\geq 0}$ is a $d$-dimensional Brownian motion, and $b:[0,T]\times \RR^n\times \cP(\RR^n)\rightarrow \RR^n$ and $\sigma:[0,T]\times \RR^n \times \cP(\RR^n)\rightarrow \RR^{n\times d}$ are sufficiently regular, and $\cP(\RR^n)$ denotes the space of probability measures over $\RR^n$. Here $\cL(x_t)$ denotes the law of the random variable $x_t$. 
Such equations arise naturally as generalisations of more concrete interacting particle systems of the form for $i=1,\ldots,N$
\[
\dd y_t^i = b\left(\frac{1}{N}\sum_{j\neq i} f_1(y_t^i-y_t^j)\right)+\sigma\left(\frac{1}{N}\sum_{j\neq i} f_2(y_t^i-y_t^j)\right)\dd \beta^i_t,
\]
where now $b$ and $\sigma $ are suitable functions, and $f_1$ and $f_2$ are used to describe the potential interaction between the particles \cite{Sznitman1991}. 
Under suitable regularity conditions (typically Lipschitz continuity with linear growth for all involved functions $b,\sigma,f_1,f_2$), one can prove so called propagation of chaos: This is the phenomenon whereby, as the number of interacting particles 
$N$ tends to infinity, the joint distribution of any fixed finite subset of particles converges to the product of identical one-particle laws satisfying a McKean–Vlasov equation of the form \eqref{eq:mckean}, rendering the particles asymptotically independent. 
In practice, one often chooses singular interaction kernels \(f_1\) and \(f_2\) (and nonlinear coefficients) \cite{hrz} to enforce strong repulsion \cite{BolleyCanizoCarrillo2011,HaLiu2009}—for example, in McKean–Vlasov flocking models of bird positions, using
\[
f_1(x)=f_2(x)=|x|^{-\gamma}
\]
generates sufficiently large repulsive drift and diffusion forces to robustly prevent collisions between trajectories.
However, such singular interaction is not Lipschitz continuous, and well-posedness of the dynamics thereby becomes an important problem.

A key ingredient in our approach is the pathwise averaging method first introduced by Catellier and Gubinelli \cite{gubicat} for  SDEs. Their main insight is that, under mild conditions on the noise, one can rewrite the drift terms as averaging operators (integrals against an “occupation measure”) that act as spatial mollifiers --- even if the nominal drift appears too singular for classical methods. More recently, one of the authors of the current paper together with Perkowski \cite{harang2020cinfinity}  refined this averaging argument further by decomposing it through local times associated with the driving noise sample paths.  The decomposition into the study of an occupation measure/local time reveals a powerful mechanism: rough noise trajectories --- suitably quantified via local-nondeterminism conditions --- end up conferring regularity to otherwise ill-posed ordinary differential equations (ODEs) via their convolution with singular drift functions. The pathwise regularisation by noise techniques has later been extended and explored in much detail and has now become a standard tool in the study of SDEs; see e.g., \cite{MR4594437, MR4498199, MR4404773, MR4528970, MR4488556, MR4635720}. 

More recently, inspired by the development of the stochastic sewing lemma \cite{Le20}, these techniques have further been developed in a mixed way, for instance  questions of existence and uniqueness of probabilistically weak solutions to SDEs has been studied by involving similar techniques (see e.g., \cite{bechtold,BechtoldHarang2024, BG, ALL23, BM,  ABLM}).  In these results, pathwise regularisation techniques are used in combination with classical methods from stochastic analysis, such as tightness and martingale arguments to prove strong or weak existence of solutions in regimes  not covered by ``pure" pathwise-regularisation-by-noise, especially when the perturbed path has particular structure and the framework requires more on the probabilistic properties for such path, for instance the singular equations driven by multiplicative noise \cite{butkovsky2023stochastic, DGLL} and weak solution theory \cite{BM}. While the stochastic sewing lemma provides powerful mathematical tools applicable in this context, it introduces additional technical complexity without necessarily enhancing the pathwise regularisation effect central to our analysis. Hence, in this work, we focus instead on the more direct pathwise averaging and local-time decomposition methods, which explicitly demonstrate the regularising effect of a rough signal in a streamlined manner.

To study pathwise regularisation of DDSDEs with non-Lipschitz coefficients driven by a  multiplicative Brownian motion, we will adapt and extend the ideas for weak existence of solutions to multiplicative classical SDEs developed in \cite{bechtold}.
 More specifically, we let $w:[0,T]\rightarrow \RR^k$ be  a continuous path which produces regularization and let $\{\beta_t\}_{t\in [0,T]}$ be a $d$-dimensional Brownian motion on a filtered probability space $(\Omega,\cF,\{\cF_t\}_{t\in [0,T]},\PP)$, and we
 consider the equation
\begin{equation}\label{eq:main intro}
    \dd x_t = b(t, F(\mathcal{L}(x_t)) - w_t)\dd t 
    	+ a(t, F(\mathcal{L}(x_t)) - w_t)\dd \beta_t, \quad x_0\in \RR^n. 
\end{equation}
Here $b:\RR_+\times\RR^k\rightarrow \RR^n, a:\RR_+\times\RR^k\rightarrow \RR^{n\times d}
    $, $\mathcal{L}(x_t)$ is the law of $x_t$ as above, 
$F: \mathcal{P}_p(\RR^n) \to \RR^k$ is a Lipschitz map 
from the space of probability measures on $\RR^n$ with 
$p$ moments under the Wasserstein $\mathbb{W}_p$ 
metric (see Definition \eqref{defin:wasserstein} below) 
to the finite dimensional space $\RR^k$. A useful example of such 
a map to keep in mind would be $F(\mathcal{L}(x_t)) = \EE f(x_t)$, for a fixed 
$f \in C^1(\RR^n;\RR^k)$. 
The path $w$ is deterministic and 
will later be assumed to possess certain regularising effects. Comparing with \eqref{eq:main intro} and \eqref{eq:mckean} we see that we have introduced an additional noise into the
dynamics, this relates to the well known idea that additive common noise cannot have
a regularising effect on singular particle systems, see \cite{DFV} for a discussion in one dimension. 
Our requirement that the dependence on 
$\mathcal{L}(x_t)$ occurs strictly via a map 
$F$ with finite-dimensional range reflects the 
constrain that we are not able now to deal with rough regularising paths in infinite dimensional space.

Because the drift and diffusion can depend on the evolving 
law of the solution, the classical well-posedness arguments 
used for pointwise SDEs are no longer applicable. 
Nonetheless, by leveraging the local-times-based version 
of the pathwise averaging method,
we show that even if 
the drift is distribution-dependent and insufficiently smooth, 
the noise can still restore enough regularity to guarantee the 
existence of solutions. We shall follow the proof strategies 
for classical SDEs developed by Hofmanova and Bechtold in 
\cite{bechtold} and later \cite{BechtoldHarang2024}, by employing tightness techniques for 
McKean–Vlasov equations—together with a Skorokhod 
representation argument—to pass to the limit in a suitable 
sequence of approximate solutions.

These techniques are 
extended and adapted to admit distribution dependent 
coefficients. By identifying the limiting process in a martingale 
sense, we establish existence of weak solutions to 
these singularly driven Mckean-Vlasov SDEs. As already motivated, models either from the physical or social sciences based on such equations often require singular or otherwise degenerate coefficients, making them difficult to analyse, and very often ill-posed. Our results  broadens 
the known scope of noise-induced regularisation 
to more sophisticated stochastic systems, 
underscoring that rough forcing can serve as a 
powerful antidote to degeneracies in a wide range 
of applications. 

In the remainder of this paper, we lay out the 
assumptions and state our main results in the 
subsection immediately following. In Section 
\ref{sec:notation_ss}, we briefly review 
notational conventions and introduce key tools 
to describe convergence of measures and 
roughness of the regularising path. 
In Section \ref{sec:stochastic integration}, 
we describe the mechanism of pathwise 
regularisation and apply this tool 
to establish well-posedness of an approximate 
system of DDSDEs. In Section \ref{sec:tightness}, 
we establish requisite tightness of laws of 
solutions to the approximating systems. 
Finally, in Section \ref{sec:lim} we conclude 
the limiting argument and establish weak 
existence of solutions to \eqref{eq:main intro}.

\subsection{Main results}

Throughout the paper, we make the following assumptions on the coefficients 
$a$, $b$, the regularising path $w$ and the map $F$:
\begin{hyp} \label{hyp:main} {\,}

    \begin{itemize}

    \item[(i)] For $a,b$ we require 
    $a, |a|^2, b\in L^\infty_tL^2_x \cap C^{\gamma_0}_t H^{-1}_x$ 
    with $\gamma_0\in(0,1)$.

    \item[(ii)] The map $F: \mathcal{P}_1(\RR^n) \to \RR^k$ is Lipschitz 
from the space of probability measures on $\RR^n$ equipped with
the Wasserstein $\mathbb{W}_1$ to the finite dimensional space $\RR^k$, that is 
 for $F \in {\rm Lip}(\mathcal{P}_1(E))$ and 
    $X\sim \mu \in \cP_p(E)$ and $Y\sim \nu\in \cP_p(E)$ then
    \begin{equation}\label{eq:1wasserstein_use}
        |F(\mu) - F(\nu)|\leq |F|_{\rm Lip} \WW_1(\mu,\nu) \le |F|_{\rm Lip} \EE |X - Y|.  
\end{equation}
    \item[(iii)] The regularising path $w$ is $\zeta_0$-locally non-deterministic with  $\zeta_0$ satisfying%
    $$
     \Big(2 + \frac{3k}2\Big) \zeta_0 < 1, \qquad
    \Big(1 + \frac{k}2\Big) \zeta_0 < \gamma_0,
    $$
    where $k$ is the dimension of the spatial variable of the path, and 
    $\gamma_0$ is the H\"older index of $a$ and $b$ in $H^{-1}_x$ from (i).
 \end{itemize}

\end{hyp}

Let us explain some of our assumptions which are partly 
implemented for clarity of presentation. As mentioned, 
we shall be using the local time to 
express functions of the rough driver $w$. The choice of 
the inclusion of the coefficients $a$ and $b$ in the 
space $L^\infty_t L^p_x \cap C^{\gamma_0}_t H^{-1}$ with $p = 2$ in 
(i) is a simplifying choice that allows us directly 
to apply known results on the spatial regularity of local times
\cite[Theorem 3.1]{harang2020cinfinity}. 
There are results on SDEs with additive noise and drift 
coefficients having more singular behaviour (see, e.g., \cite{harang2020cinfinity, MR4594437} 
and references therein). We believe that those regimes 
are attainable in our setting (with multiplicative noise). 
However, as our goal is not to improve upon those results 
for the drift coefficient in the setting of multiplicative 
noise, and we refrain from maximally relaxing assumptions 
on drift coefficients, in order to highlight key advancements 
related to the possibly degenerate diffusion coefficient. 
In the same vein, we further required $|a|^2$ to be in the 
same space as $a$ and $b$ as an assumption that will lighten 
the calculations presented below (though this will necessarily 
mean that $a \in L^\infty_t (L^2_x \cap L^4_x)$). 
The object $aa^T$ appears in the It\^o correction term when 
the It\^o formula is applied to $x_t$ in \eqref{eq:main intro}, 
and much of the calculations for $a$ and $b$ can now 
be directly repeated for $aa^T$. 

Finally, we require $F$ to be globally Lipschitz in order to control 
differences $F(\mu) - F(\nu)$ by the Wasserstein distance 
via \eqref{eq:1wasserstein_use}.

Under Hypothesis \ref{hyp:main}, our main results can be summarised as follows and we give the corresponding proofs in Section \ref{sec:lim}.
\begin{thm}\label{thm:main_result}
   If Hypothesis \ref{hyp:main} holds, then there exists a weak solution 
   $x \in L^1_\omega C^{\gamma_1/4}_t$ to \eqref{eq:main intro} with 
   \begin{align}\label{con.lem-gamme1}
    \frac{\gamma_1}2 > \Big(1 + \frac{k}2\big) \zeta_0.
    \end{align}
\end{thm}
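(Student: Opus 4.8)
The plan is to construct the weak solution by a standard \emph{approximation--tightness--identification} scheme, following the blueprint of Bechtold \& Hofmanov\'a \cite{bechtold} but upgraded to the McKean--Vlasov setting. First I would mollify the coefficients $a,b$ in space, replacing them by smooth $a^\varepsilon,b^\varepsilon$, and freeze the law: for a fixed flow of measures $\mu_\cdot \in C([0,T];\cP_1(\RR^n))$ the equation becomes a classical (non-distribution-dependent) SDE with singular-but-mollified coefficients and a deterministic regularising shift $w$. Using the pathwise averaging / local-time machinery of Section \ref{sec:stochastic integration} (which is exactly where Hypothesis \ref{hyp:main}(iii) on $\zeta_0$-local nondeterminism and the constraint $(1+k/2)\zeta_0<\gamma_0$ are consumed), together with a Banach fixed-point argument in $C([0,T];\cP_1(\RR^n))$ using the Lipschitz property \eqref{eq:1wasserstein_use} of $F$, I obtain for each $\varepsilon>0$ a unique weak (indeed strong, after mollification) solution $x^\varepsilon$ to the approximate DDSDE, together with a priori bounds uniform in $\varepsilon$: an $L^1_\omega C^{\gamma_1/4}_t$ bound on $x^\varepsilon$ coming from the averaged-drift estimates and the Burkholder--Davis--Gundy / Kolmogorov continuity estimate applied to the martingale part, with $\gamma_1$ chosen to satisfy \eqref{con.lem-gamme1}.

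The second step is tightness, carried out in Section \ref{sec:tightness}. The uniform H\"older bound on $x^\varepsilon$ in $L^1_\omega$ gives tightness of the laws $\cL(x^\varepsilon)$ on $C([0,T];\RR^n)$ by Arzel\`a--Ascoli, and simultaneously tightness of the occupation-measure / local-time objects built from $w$ and of the driving Brownian motions $\beta$; one also needs tightness of the pair $(x^\varepsilon, \text{averaged drift})$ so that the nonlinear averaged functionals pass to the limit. I would then invoke the Skorokhod representation theorem to obtain, on a new probability space, almost surely convergent copies $(\tilde x^\varepsilon, \tilde\beta^\varepsilon)\to(\tilde x,\tilde\beta)$, with $\tilde\beta$ still a Brownian motion with respect to the filtration generated by the limit. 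A key subtlety here, and the one I expect to be the \emph{main obstacle}, is the passage to the limit in the distribution-dependent terms $F(\cL(\tilde x^\varepsilon_t))-w_t$ inside the nonlinear, merely $H^{-1}$-regular coefficients $a,b$: one cannot evaluate $a,b$ pointwise, so the identification must be done through the averaging operators (integration against the occupation measure of $F(\cL(x^\varepsilon_\cdot))-w$), and one must show both that $\cL(\tilde x^\varepsilon_t)\to\cL(\tilde x_t)$ in $\WW_1$ uniformly in $t$ (using the uniform moment bounds and a.s. convergence), \emph{and} that the local-time/occupation-measure regularity survives the shift by the converging deterministic-plus-random path. This is where the interplay between the Lipschitz hypothesis on $F$, the local-nondeterminism of $w$, and the quantitative averaging bounds is essential.

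The third step is the martingale identification. For a dense countable family of test functions $\varphi\in C_b^2(\RR^n)$ I would show that
\begin{equation*}
M^\varphi_t := \varphi(\tilde x_t) - \varphi(\tilde x_0) - \int_0^t \Big(b\big(s,F(\cL(\tilde x_s))-w_s\big)\cdot\nabla\varphi(\tilde x_s) + \tfrac12\,\mathrm{tr}\big[(aa^T)\big(s,F(\cL(\tilde x_s))-w_s\big)\nabla^2\varphi(\tilde x_s)\big]\Big)\dd s
\end{equation*}
is an $\{\tilde\cF_t\}$-martingale, and that its bracket is $\int_0^t (aa^T)(s,F(\cL(\tilde x_s))-w_s)\colon\nabla\varphi\otimes\nabla\varphi\,\dd s$; here the integrals are interpreted via the local-time/averaging formulation, which is exactly why $|a|^2$ was assumed in the same space as $a,b$ in Hypothesis \ref{hyp:main}(i). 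One passes to the limit in the corresponding identity for $\tilde x^\varepsilon$ using the a.s. convergence, the uniform bounds (for uniform integrability), and the continuity of the averaged functionals established in step two. A standard representation theorem for continuous local martingales with absolutely continuous bracket then produces (possibly after enlarging the space) a Brownian motion driving $\tilde x$, so that $\tilde x$ solves \eqref{eq:main intro} in the weak sense; the regularity class $\tilde x\in L^1_\omega C^{\gamma_1/4}_t$ is inherited from the uniform bound of step one by Fatou. Finally I would record that the constraint \eqref{con.lem-gamme1} is precisely what makes the averaged-drift H\"older exponent positive, and that \eqref{con.lem-gamme1} is compatible with Hypothesis \ref{hyp:main}(iii), so the theorem holds as stated.
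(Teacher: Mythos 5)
Your proposal follows the same overall architecture as the paper (mollify, solve the approximate McKean--Vlasov SDE, uniform H\"older/moment bounds via sewing against the local time of $w$, Skorokhod representation, uniform-in-$t$ convergence of the laws in $\WW_1$, martingale identification), and it is essentially sound, but it deviates in two places worth comparing. First, for the identification step you propose the generator formulation of the martingale problem with test functions $\varphi\in C^2_b$ followed by a martingale representation theorem to reconstruct a Brownian motion; the paper instead carries the Brownian motion $\tilde\beta^k$ through the Skorokhod argument as a component of the tuple and works directly with the three processes $\tilde M$, $\tilde R=|\tilde M|^2-\int a^2$, $\tilde N=\tilde M\tilde\beta-\int a$, identifying $\tilde M$ as the stochastic integral against the \emph{given} $\tilde\beta$ via its quadratic and cross variations. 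Both are standard; the paper's route is more economical here because the coefficients do not depend on the state variable, so there is no need for test functions (which would force you to sew germs of the form $\nabla\varphi(\tilde x_s)\cdot\int_s^t b(r,F(\mu_s)-w_r)\,\dd r$, adding an extra coboundary term from the increment of $\nabla\varphi(\tilde x)$), nor to enlarge the space to produce a driving noise. Second, you describe the averaging as integration against the occupation measure of the \emph{perturbed} path $F(\cL(x^\ep_\cdot))-w$, and flag that one must show its regularity ``survives the shift.'' The paper avoids this harder object entirely: the germ $(A_1)_{s,t}=\int_s^t b(s,F(\mu_s)-w_r)\,\dd r=b(s,\cdot)*L^w_{s,t}(F(\mu_s))$ freezes $F(\mu_s)$ at the left endpoint, so only the local time of $w$ itself is ever needed, and the perturbation enters solely through the H\"older continuity of $t\mapsto F(\mu_t)$ (a consequence of the uniform moment bounds and the Lipschitz property of $F$), which controls the coboundary. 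Your route could in principle be made to work via nonlinear-Young-type stability of local times under regular perturbations, but it is strictly harder and is not what Hypothesis \ref{hyp:main}(iii) is calibrated for; the freezing trick is the intended mechanism and you should adopt it to close the argument cleanly.
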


\subsection{Notation and preliminaries}\label{sec:notation_ss}

In this section, we house the definitions of several technical 
notions of use throughout the paper. In particular, we state precisely 
the way that the regularising path is rough via an index of ``local 
non-determinism". We also review the key ``sewing lemma" that will be 
used repeatedly in this paper. Finally, we record several notational 
conventions we shall be employing.

\subsubsection*{\bf Notations and conventions}
  For any $N\in \mathbb{N}$ and $p\in [1,\infty]$, we denote by $L^p(\RR^d;\RR^N)$ the standard Lebesgue space; when there is no risk of confusion in the parameter $N$, we will simply write $L^p_x$ for short and denote by $\| \cdot\|_{L^p_x}$ the corresponding norm.
Similarly for the Bessel potential spaces $W^{\beta,p}_x=W^{\beta,p}(\RR^d;\RR^N)$, which are defined for $\beta\in\RR$, with corresponding norm
$$\|\varphi\|_{W_x^{\beta, p}}:=\|(\II-\Delta)^{\beta/2}\varphi\|_{L_x^{ p}};$$
$H^\beta_x:=W_x^{\beta, 2}$. For $\alpha\in [0,\infty)$, $C_x^\alpha =C^\alpha (\mathbb{R}^d;\RR^N)$ stands for the usual H\"older continuous function space, made of continuous bounded functions with  continuous and bounded derivatives up to order $\lfloor \alpha\rfloor\in\mathbb{N}$ and with globally $\{\alpha\}$-H\"older continuous derivatives of order $\lfloor \alpha\rfloor$.

We denote by $C_t=C([0,T];\RR^d)$ the path space of continuous functions on $[0,T]$, endowed with the supremum norm $\| \varphi\|_{C_t}=\sup_{t\in [0,T]} |\varphi_t|$.

Given a Banach space $E$ and a parameter $q\in [1,\infty]$, we denote by $L^q_t E = L^q(0,T;E)$ the space of measurable functions $f:[0,T]\to E$ such that
\[
\| \varphi\|_{L^q_t E}:=\Big( \int_0^T \| \varphi_t\|_E^q \dd t\Big)^{\frac{1}{q}} <\infty
\]
with the usual convention of the essential supremum norm in the case $q=\infty$. Similarly, given a probability space $(\Omega,\mathcal{F},\mathbb{P})$ and $m\in [1,\infty)$, we denote by $L^m_\omega E = L^m(\Omega,\mathcal{F},\mathbb{P};E)$ the space of $E$-valued $\mathcal{F}$-measurable random variables $X$ such that
\[
\| X\|_{L^m_\omega E}:= \big( \mathbb{E}  \| X\|_E^m\big)^{\frac{1}{m}}<\infty
\]
where $\EE$ denotes expectation w.r.t.~$\mathbb{P}$. The above definitions can be concatenated by choosing at each step a different $E$, so that one can define $L^m_\omega C_t$, $L^q_t L^p_x$, $C_t^\alpha E$ and so on. Whenever $q=p$, we write for simplicity $L^p_{t,x}$ in place of $L^p_t L^p_x$.

  We simply drop the sub-index $t,x,\omega$ among using the aforementioned norms when the context is clear about the correspondence.

\begin{defin}[Wasserstein distance]\label{defin:wasserstein}
    For $p\geq1,$ let $\cP_p(E)$ denote the set of probability measures over $E$ 
    with finite $p$-moment, i.e. $\mu\in \cP_p(E) $ satisfies
    \[
    \int_E |x|^p\mu(\dd x)<\infty. 
    \]
     Furthermore, we define the $p$-Wasserstein distance 
    $\WW_p:\cP_p(E)\times \cP_p(E)\rightarrow \RR$ by  
    \[
    \WW_p(\mu,\nu):=\inf_{\rho\in \Pi(\mu,\nu)} 
        \left(\int_{E\times E} |x-y|^p  \rho(\dd x,\dd y)\right)^{\frac{1}{p}}, 
    \]
    where $\Pi(\mu,\nu)$ denotes  the set of all couplings of $\mu$ and $\nu$.
    For $p=1$ we have the Kantorovich--Rubenstein duality, where
    \[
    \WW_1(\mu,\nu)= \frac{1}{K}\sup_{|\varphi|_{\rm Lip}\leq K } 
    \int_E \varphi(y)(\mu(\dd y)-\nu(\dd y)). 
    \]
\end{defin}
Local non-determinism is a probabilistic type of 
roughness condition, specified by an index $\zeta_0 > 0$ 
which generalises the Hurst index for Gaussian processes. 
It guarantees the inclusion of a local time $L^w$ in 
appropriate function spaces. We use this notion of 
roughness to characterise our regularising path $w$. 
Proposition \ref{thm:localnondet} elucidates this notion further.
\begin{defin}[Local non-determinancy]
Let $\{w_t\}_{t\in [0,T]}$ be a $d$-dimensional Gaussian 
process on a filtered probability space $(\Omega,\cF,
	\{\cF_t\}_{t\in [0,T]},\PP)$. We say  $\{w_t\}_{t\in [0,T]}$ 
is \emph{$\zeta$-locally non-deterministic} if it satisfies 
\begin{equation}\label{eq:local non determinism}
\inf_{t\in[0,T]}\inf_{s\in[0,t]}\inf_{z\in\RR^d,|z|=1} 
	\frac{z^*{\rm Var}(w_t|\cF_s)z}{|t-s|^{2\zeta}}>0,
    \end{equation}
where ${\rm Var}(w_t|\cF_s)
	=\EE\big[(w_t-\EE(w_t|\cF_s))(w_t-\EE(w_t|\cF_s))^*|\cF_s\big]$, 
$\zeta$ is the {\em non-determinancy index}.    
\end{defin}

Another central tool that will be used in this article is the sewing lemma, introduced by Gubinelli in \cite{GUBINELLI200486} for rough paths equations, and later adopted in many different contexts. See, e.g., \cite[Sec. 4]{frizhairer} for a good introduction. 

\begin{defin}
    [Sewing germs] Let $E$ be a Banach space. Let $[0, T]$ be a given interval. Let $\Delta_n$ denote the $n$-th simplex of $[0, T]$. For a function $A: \Delta_2 \rightarrow \RR^d$ define the mapping $\delta A: \Delta_3 \rightarrow \RR^d$ via $
(\delta A)_{s,u,t}:= A_{s,t} - A_{s,u} - A_{u,t}$. Provided $A_{t,t} = 0$ we say that for $\alpha, \beta > 0$ we have $A \in C^{\alpha,\beta}_2(E)$ if $\|A\|_{\alpha,\beta} < \infty$ where $\|A\|_{\alpha,\beta}:=\|A\|_{\alpha}+\|\delta A\|_{\beta}$ with $\|A\|_{\alpha}:=\sup_{(s,t)\in\Delta_2}\frac{\|A\|_E}{|t-s|^\alpha}$ and $\|\delta A\|_{\beta}:=\sup_{(s,u,t)\in\Delta_3}\frac{\|(\delta A)_{s,u,t}\|_E}{|t-s|^\beta}$.

If for any sequence of partitions $(P^n
([s, t]))_n$  over $[s, t]$ whose mesh size goes to zero, the
quantity $\sum_{[u,v]\in P^n([s,t])}A_{u,v}$ converges to the same limit, 
then  we denote this limit by
\begin{align*}
(\mathcal{I}A)_{s,t}:=
    \lim_{n\rightarrow\infty}\sum_{[u,v]\in P^n([s,t])}A_{u,v} .
\end{align*}
\end{defin}
\begin{lem}
    [Sewing lemma {\cite[Lemma 4.2]{MR4174393}}] \label{lem:sewing} Let $0 < \alpha \leq 1 < \beta.$ Then for any $A \in C
^{\alpha,\beta}_2(E), (\mathcal{I}A)$ is well defined.
Moreover, denoting $(\mathcal{I}A)_t
:= (\mathcal{I}A)_{0,t}$ for $t\in[0,T]$, we have $(\mathcal{I}A) \in C^\alpha_t E$ and $(\mathcal{I}A)_0 = 0$ and for some
constant $ c > $0 depending only on $\beta$ we have
$\|(\mathcal{I}A)_t - (\mathcal{I}A)_s - A_{s,t}\|_E \leq c \|\delta A\|_\beta
|t - s|^\beta.$
We say the germ $A$ admits a sewing $(\mathcal{I}A)$ and call $\mathcal{I}$ the sewing operator.
\end{lem}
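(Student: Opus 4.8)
To prove the sewing lemma I would run the classical coarsening argument of Gubinelli \cite{GUBINELLI200486} (see also \cite[Sec.~4]{frizhairer}). Fix $(s,t)\in\Delta_2$, and for a partition $P=\{s=r_0<r_1<\dots<r_N=t\}$ of $[s,t]$ write $A_P:=\sum_{i=0}^{N-1}A_{r_i,r_{i+1}}$. The first step is a one-point coarsening estimate: when $N\ge 2$, since $\sum_{i=1}^{N-1}(r_{i+1}-r_{i-1})=(t-s)+(r_{N-1}-r_1)\le 2(t-s)$, there is an interior node $r_i$ with $r_{i+1}-r_{i-1}\le 2(t-s)/(N-1)$; deleting it from $P$ changes the sum by exactly $(\delta A)_{r_{i-1},r_i,r_{i+1}}$, hence by at most $\|\delta A\|_\beta\big(2(t-s)/(N-1)\big)^\beta$ in $E$.

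Iterating this down to the trivial partition $\{s,t\}$ and telescoping the increments, I would obtain the uniform a priori bound $\norm{A_P-A_{s,t}}_E\le c\,\|\delta A\|_\beta\,(t-s)^\beta$ with $c=c(\beta):=2^\beta\sum_{m\ge 1}m^{-\beta}$, the series converging precisely because $\beta>1$; note this holds for \emph{every} partition $P$ of \emph{every} subinterval. Applying this bound cell-by-cell then controls refinements: if $P'\supseteq P$, decomposing $P'$ into its traces on the cells of $P$ gives $\norm{A_{P'}-A_P}_E\le c\|\delta A\|_\beta\,|P|^{\beta-1}(t-s)$, where $|P|$ denotes the mesh. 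Comparing any two partitions $P,Q$ with meshes $\le\delta$ to their common refinement $P\cup Q$ then yields $\norm{A_P-A_Q}_E\le 2c\|\delta A\|_\beta(t-s)\,\delta^{\beta-1}\to 0$ as $\delta\to 0$. Since $E$ is complete, $\sum_{[u,v]\in P^n}A_{u,v}$ is Cauchy along any mesh-zero sequence of partitions, and all such sequences converge to a common limit, which I define to be $(\cI A)_{s,t}$.

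Finally I would record the asserted properties. Passing to the limit in the a priori bound gives $\norm{(\cI A)_{s,t}-A_{s,t}}_E\le c\|\delta A\|_\beta(t-s)^\beta$, so in particular $(\cI A)_{t,t}=0$; restricting to partitions that contain a fixed intermediate point $u$ and sending the mesh to zero yields additivity $(\cI A)_{s,t}=(\cI A)_{s,u}+(\cI A)_{u,t}$, whence $(\cI A)_t:=(\cI A)_{0,t}$ satisfies $(\cI A)_t-(\cI A)_s=(\cI A)_{s,t}$ and $(\cI A)_0=0$; and bounding $\norm{(\cI A)_{s,t}}_E\le\|A\|_\alpha(t-s)^\alpha+c\|\delta A\|_\beta(t-s)^\beta$ with $\alpha\le 1<\beta$ and $t-s\le T$ shows $\cI A\in C^\alpha_t E$. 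The one genuinely delicate point is the passage from the uniform a priori estimate to actual convergence with a partition-independent limit: this is exactly where completeness of $E$ and, crucially, the strict inequality $\beta>1$ — which supplies the gain $|P|^{\beta-1}\to 0$ — are used; the averaging (pigeonhole) selection of the node to delete is the other, more elementary, key trick.
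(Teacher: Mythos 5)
Your proposal is correct. Note that the paper itself gives no proof of this lemma—it is quoted verbatim from Friz--Hairer \cite[Lemma 4.2]{MR4174393}—so there is no internal argument to compare against; your coarsening (pigeonhole) argument is the classical Young--Gubinelli route, equivalent to the dyadic-refinement proof in the cited reference, and you correctly identify where $\beta>1$ (convergence of $\sum_m m^{-\beta}$ and the mesh gain $|P|^{\beta-1}$) and completeness of $E$ enter, and you properly deduce additivity, $(\mathcal{I}A)_0=0$, the bound $\|(\mathcal{I}A)_{s,t}-A_{s,t}\|_E\le c(\beta)\|\delta A\|_\beta|t-s|^\beta$, and the $C^\alpha_t$ regularity from $\|A\|_\alpha<\infty$.
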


As remarked by Bechtold and Hofmanova in \cite[Remark 3.4]{bechtold}, 
in contrast to much regularisation-by-noise results today, 
there appears to be no advantage in the current 
(multiplicative noise) setting in using stochastic sewing 
arguments over deterministic ones. We therefore follow 
in using deterministic sewing setting here as well.

\section{Regularisation by noise and Approximation of DDSDEs}\label{sec:stochastic integration}
To investigate \eqref{eq:main intro} we begin with writing it in integral form as 
\begin{equation}\label{eq:transformed}
     x_t =x_0 + \int_0^t  b(s,F(\mu_s)-w_s)\dd s + \int_0^t a(s,F(\mu_s)-w_s)\dd \beta_s.  
\end{equation}
When $b$ and $a$ are singular functions --- recall that we are 
interested in choosing $b$ to be a distribution and $a$ to be 
only integrable of some order --- we need to make sure that the 
integrals appearing in \eqref{eq:transformed} make sense. We 
therefore begin with a brief recollection of the regularising 
effects obtained from the local time of sufficiently irregular 
paths, and show how this may be used to make sense of the integrals 
appearing in \eqref{eq:transformed}. 

\subsection{Regularisation through averaging}

Following \cite[Equation (3)]{gubicat}, we define the 
averaging operator along a path $w:[0,T]\rightarrow \RR^k$ as
\begin{align}\label{eq:averaging_operator}
T^w_t f(x) := \int_0^t f(x  - w_r)\dd r.
\end{align}
Define the occupation measure $\nu_t^w$ associated to $w$ by 
\begin{equation}
    \nu_t^w(A)=\lambda\{s\in [0,t]|\,w_s\in A\},\quad A\in \cB(\RR^k), 
\end{equation}
where $\lambda$ denotes the Lebesgue measure, and we denote by 
$L^{w}_t(x)$ the density associated to $\nu_t^w$ with respect to Lebesgue measure (whenever this 
exists). It follows that 
\begin{equation}\label{eq:localtime_repr}
    T^w_tf (x)= f\ast L^w_t(x). 
\end{equation}
The regularity of local times associated to stochastic processes 
has been a central topic of investigation in the field of stochastic 
analysis for many years. In more recent years, much advancement 
has been made in obtaining these regularity estimates on a joint 
time-space scale. In particular, we have the following proposition:
\begin{prop}[{\cite[Theorem 3.1]{harang2020cinfinity}}]\label{thm:localnondet}
    Assume $\zeta<\frac{2}{k}$. Let $\{w_t\}_{t\in [0,T]}$ be a $k$-dimensional Gaussian process on a filtered probability space $(\Omega,\cF,\{\cF_t\}_{t\in [0,T]},\PP)$, and suppose that it is $\zeta$-locally non-deterministic (i.e. $\{w_t\}_{t\in [0,T]}$ satisfies \eqref{eq:local non determinism}). 
    Then for almost all $\omega\in \Omega$ the associated local time $L^{w(\omega)}$ is contained in $C^\gamma_t H^{\lambda}_x$, with $\lambda \in \RR$, $\gamma > 0$ for 
    \begin{equation}\label{eq:general_localnondet_condition}
      \gamma<1-(\lambda+\frac{k}{2})\zeta.%
    \end{equation}
More precisely, $\PP$-almost surely, and for all $s\leq t\in [0,T]$ we have 
    \begin{align}
        \label{est:Local-time}
        \|L_t^{w(\omega)}-L_s^{w(\omega)}\|_{H^\lambda_x}\leq C(\omega)|t-s|^\gamma.
    \end{align}
\end{prop}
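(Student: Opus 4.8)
The plan is to route everything through the Fourier transform of the local time, to reduce the statement to a moment bound for $\norm{L^w_t-L^w_s}_{H^\lambda_x}$, and to close with the Kolmogorov continuity criterion in the Hilbert space $H^\lambda_x$; the local non-determinism enters only through a Gaussian bound on the conditional characteristic function of the increments of $w$. Concretely, whenever $\nu^w_t$ has a density one has $\widehat{L^w_t}(\xi)=\int_0^t e^{i\langle\xi,w_r\rangle}\dd r$, so with $A_{s,t}(\xi):=\int_s^t e^{i\langle\xi,w_r\rangle}\dd r$ Plancherel gives
\begin{align*}
\norm{L^w_t-L^w_s}_{H^\lambda_x}^2=\int_{\RR^k}(1+\abs\xi^2)^\lambda\,\abs{A_{s,t}(\xi)}^2\dd\xi .
\end{align*}
I would first prove that for every $\gamma'<1-(\lambda+\tfrac k2)\zeta$ and all large $m\in\mathbb{N}$,
\begin{align*}
\EE\,\norm{L^w_t-L^w_s}_{H^\lambda_x}^{2m}\le C_m\,\abs{t-s}^{2m\gamma'},\qquad 0\le s\le t\le T,
\end{align*}
and then apply the Kolmogorov--Chentsov theorem for processes valued in the separable Hilbert space $H^\lambda_x$: this produces a modification of $t\mapsto L^w_t$ which is $\PP$-a.s.\ $\gamma$-H\"older into $H^\lambda_x$ for every $\gamma<\gamma'-\tfrac1{2m}$, with H\"older seminorm in $L^{2m}_\omega$ and hence a.s.\ finite, giving \eqref{est:Local-time}. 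Letting $\gamma'\uparrow 1-(\lambda+\tfrac k2)\zeta$ and $m\to\infty$ then covers the whole range \eqref{eq:general_localnondet_condition}. The same estimate with $\lambda=0$ and some $\gamma'>0$ (available precisely because $\zeta<\tfrac 2k$) also forces $\widehat{L^w_t}\in L^2_\xi$ a.s., so that $\nu^w_t$ is a.s.\ absolutely continuous and $L^w_t$ genuinely exists.

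\textbf{Reduction to one frequency.} Next I would peel off the spatial integral: by Minkowski's integral inequality applied to the $L^m_\omega$-norm of $\int(1+\abs\xi^2)^\lambda\abs{A_{s,t}(\xi)}^2\dd\xi$, the moment bound follows once
\begin{align*}
\EE\,\abs{A_{s,t}(\xi)}^{2m}\le C_m\,\abs{t-s}^{2m\gamma'}\,(1+\abs\xi^2)^{-m\alpha}
\end{align*}
is established for some $\alpha\in(\lambda+\tfrac k2,\tfrac1\zeta)$ --- and such $\alpha$ exists because $\zeta<\tfrac2k$ makes $\tfrac1\zeta>\tfrac k2$ --- since then $\int(1+\abs\xi^2)^{\lambda-\alpha}\dd\xi<\infty$.

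\textbf{The single-frequency moment.} This is where the roughness of $w$ is exploited. Expanding $\abs{A_{s,t}(\xi)}^{2m}$ as an integral over $[s,t]^{2m}$ of $\exp\!\big(i\langle\xi,\sum_{j=1}^m(w_{u_j}-w_{v_j})\rangle\big)$ and moving $\EE$ inside, each integrand is the characteristic function of a centred Gaussian. Ordering the $2m$ time points as $s\le t_1<\dots<t_{2m}\le t$ and rewriting the exponent as $i\sum_l\langle\eta_l,w_{t_l}\rangle$ with $\eta_l\in\{+\xi,-\xi\}$, successive conditioning on the $\cF_{t_l}$ together with the $\zeta$-local non-determinism \eqref{eq:local non determinism} yields a bound of the form
\begin{align*}
\Big|\EE\exp\Big(i\sum_l\langle\eta_l,w_{t_l}\rangle\Big)\Big|\le\exp\Big(-c\,\abs\xi^2\sum_{l=2}^{2m}\abs{t_l-t_{l-1}}^{2\zeta}\,c_l^2\Big),
\end{align*}
where $c_l\in\mathbb{Z}$ is the net surviving sign above level $t_l$. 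I would then integrate this over the ordered $2m$-simplex: on the gaps with $c_l\ne0$ the factor $\exp(-c\abs\xi^2(\cdot)^{2\zeta})$ is integrated out, each such gap contributing $\sim\abs\xi^{-1/\zeta}$ after the scaling $r\mapsto\abs\xi^{-1/\zeta}r$, while the remaining ``passive'' gaps are handled by an iterated Beta-function bound $\int_{s<r_1<\dots<r_p<t}\prod(r_{i+1}-r_i)^{-a}\,\dd r\lesssim\abs{t-s}^{p(1-a)}$, valid for $a<1$. The slowest-decaying contribution is the nested pairing of the $u_j$'s with the $v_j$'s (exactly $m$ active and $m$ passive gaps), which gives $\abs{t-s}^m\abs\xi^{-m/\zeta}$; interpolating this against the trivial bound $\abs{A_{s,t}(\xi)}\le\abs{t-s}$ delivers the displayed single-frequency estimate with $\alpha$ any value $<\tfrac1\zeta$ and, correspondingly, $\gamma'$ any value $<1-\alpha\zeta$, hence $\gamma'$ arbitrarily close to $1-(\lambda+\tfrac k2)\zeta$. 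The $(2m)!$ orderings only cost a constant absorbed into $C_m$.

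\textbf{Main obstacle.} The first three steps are Fourier analysis and standard continuity theorems; the crux is the single-frequency computation: pushing the local-non-determinism bound through the combinatorics of the reordering --- keeping track of which signed partial sums of $\xi$ survive on each time gap, so that the Gaussian damping can be traded for the Sobolev weight --- and then executing the iterated simplex integral so that the sharp balance between $\gamma'$, $\alpha$ and $\zeta$ (and the role of the threshold $\zeta<\tfrac2k$) appears with the correct constants.
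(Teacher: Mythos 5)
The paper does not prove this proposition at all --- it is quoted verbatim from \cite[Theorem 3.1]{harang2020cinfinity} --- so your attempt has to be measured against the argument in that reference, and there is a genuine gap exactly at the step you yourself single out as the crux. Your single-frequency bound
$\big|\mathbb{E}\exp\big(i\sum_l\langle\eta_l,w_{t_l}\rangle\big)\big|\le\exp\big(-c\,|\xi|^2\sum_l c_l^2\,|t_l-t_{l-1}|^{2\zeta}\big)$
does not follow from ``successive conditioning'' under the hypothesis \eqref{eq:local non determinism}, which is a \emph{one-sided} (filtration-based) condition. Conditioning on $\mathcal{F}_{t_{2m-1}}$ does extract the deterministic factor $\exp(-c|\xi|^2(t_{2m}-t_{2m-1})^{2\zeta})$, but it leaves behind the phase $\exp\big(i\langle\eta_{2m},\mathbb{E}[w_{t_{2m}}\mid\mathcal{F}_{t_{2m-1}}]\rangle\big)$, an anticipative functional of the whole path up to $t_{2m-1}$; at the next conditioning the effective coefficient of $w_{t_{2m-1}}$ is modified by this prediction term, and \eqref{eq:local non determinism} no longer lower-bounds the relevant conditional variance (possible cancellations cannot be excluded). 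Equivalently, since $|\mathbb{E}e^{iZ}|=e^{-\mathrm{Var}(Z)/2}$, what you need is the multi-time lower bound $\mathrm{Var}\big(\sum_l c_l\langle\xi,w_{t_l}-w_{t_{l-1}}\rangle\big)\gtrsim|\xi|^2\sum_l c_l^2(t_l-t_{l-1})^{2\zeta}$, i.e.\ Berman--Pitt \emph{strong} local non-determinism (conditioning on all other time points), which is strictly stronger than what is assumed; the law of total variance applied to $\mathcal{F}_{t_{2m-1}}$ only recovers the last gap. This is precisely why the cited proof takes a different route: it applies the stochastic sewing lemma to the germ $A_{s,t}=\mathbb{E}[L_{s,t}\mid\mathcal{F}_s]$ (so that $\mathbb{E}_s\delta A\equiv 0$ by the tower property), and uses \eqref{eq:local non determinism} only once at a time, through the bound $\|\mathbb{E}[L_{s,t}\mid\mathcal{F}_s]\|_{H^\lambda_x}\lesssim\int_s^t(r-s)^{-\zeta(\lambda+k/2)}\dd r$ for the conditional Gaussian density of $w_r$ given $\mathcal{F}_s$. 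The filtration formulation of local non-determinism is tailored to that argument, not to the classical multi-time characteristic-function computation you sketch.

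Two further, smaller points. First, even granting your product bound, your simplex/interpolation count (worst case $m$ active gaps) yields decay $(1+|\xi|^2)^{-m\alpha}$ only for $\alpha\le\frac{1}{2\zeta}$, not ``any $\alpha<\frac1\zeta$'', so your scheme reaches $\gamma<1-(\lambda+\frac k2)\zeta$ only under the extra constraint $\zeta(\lambda+\frac k2)<\frac12$; this happens to be the regime in which the result is actually true and used (compare Example \ref{exa:Fbm} and the stochastic-sewing proof, which has the same restriction), but it contradicts your claim to cover the whole stated range. Second, the Fourier/Plancherel reduction, the Minkowski step, and the Hilbert-space Kolmogorov--Chentsov conclusion are fine and essentially match the outer layer of the reference's proof; the unproved inner inequality is the only, but essential, missing piece. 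To repair the attempt under the stated hypothesis you should either replace the multi-time estimate by the stochastic sewing argument of \cite{harang2020cinfinity}, or strengthen the assumption to two-sided strong local non-determinism and then carry out the classical Berman--Pitt computation you describe.
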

\begin{rem}\label{rem:optimalnum}
By Sobolev embedding, or by treating the averaged objects 
\eqref{eq:averaging_operator} directly (not via local 
times), it is possible to leverage sharper regularisation 
results in more refined Besov or Fourier--Lebesgue spaces \cite{gubicat}. 
These results may serve to optimise the numerology in 
(iii) of Hypothesis \ref{hyp:main} dictating the allowable 
ranges of smoothness indices $\gamma_0$, $\gamma_1$, 
and $\zeta_0$, that allows us to define integrals 
of rough integrands (by sewing, see Lemma \ref{lem:sewing}). 
We give further details in Remark \ref{rem:remark2.9}; 
however, we do not consider optimality of indices a focus of this paper.
\end{rem}

\begin{rem}
    The condition in \eqref{eq:local non determinism} is a type of \emph{local non-determinism} condition, which is widely used in connection with the analysis of local times and occupation measures, see e.g. \cite{horowitz}.
\end{rem}
\begin{ex}\label{exa:Fbm}
    The fractional Brownian motion (fBm) $\{B_t^H\}_{t\geq 0}$ on a probability space $(\Omega, \cF,\{\cF_t\}_{t\geq 0},\PP)$ is a Gaussian process with mean zero and covariance 
    \[
    \EE[B_t^H B_s^H] =\frac{1}{2} (t^{2H}+s^{2H}-|t-s|^{2H}). 
    \]
    The parameter $H\in (0,1)$ is known as the Hurst parameter and determines the auto-correlation of the process, as well as the irregularity of the sample paths. 
    The conditional variance of this process is given by 
    \[
    {\rm Var}(B_t^H|\cF_s)=(t-s)^{2H}. 
    \]
    Using this, we see that the fBm is locally non-deterministic in the sense of condition \eqref{eq:local non determinism} with $\zeta=H$. Thus, for almost all $\omega\in \Omega$, the local time $L^{B^H(\omega)} $ associated to the fBm is contained in $C^\gamma H^\lambda$, where 
    \[
     \lambda<\frac{1}{2H}-\frac{k}{2},\quad \mathrm{and} \quad \gamma<1-(\lambda+\frac{k}{2})H.
    \]
\end{ex}
The following corollary then follows from a simple application of Young's convolution inequality in Sobolev spaces (see, e.g., \cite[Lemma 1.4]{Bahouri2011}):

\begin{cor}\label{thm:localnondet_cor}
    Suppose $g\in H^{\alpha}_x$ and $L^w\in C^\gamma_t H^{\lambda}_x$ 
    with $\alpha,\lambda\in\RR, \gamma>0$.
    Then 
    $g\ast L^w\in C^\gamma_tW^{\alpha+\lambda,\infty}_x$, 
    and for any $0\leq s\leq t\leq T$
    \begin{equation}
        \|g\ast L_{s,t}^w\|_{W^{\alpha+\lambda, \infty}_x}
        \lesssim \|g\|_{H^\alpha_x}\|L^w\|_{C^\gamma_tH^\lambda_x}|t-s|^\gamma.
    \end{equation}
\end{cor}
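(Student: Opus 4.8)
The plan is to deduce Corollary \ref{thm:localnondet_cor} from Proposition \ref{thm:localnondet} by combining the local-time representation \eqref{eq:localtime_repr} of the averaging operator with a Young-type convolution inequality on Bessel potential spaces. The starting observation is that $g \ast L^w_{s,t} = g \ast (L^w_t - L^w_{s})$, so that controlling the time-increment in $W^{\alpha+\lambda,\infty}_x$ amounts to controlling the convolution $g \ast (L^w_t - L^w_s)$ in that space. By Proposition \ref{thm:localnondet} (under the standing assumption that $w$ is $\zeta$-locally non-deterministic with $\zeta < 2/k$, which is subsumed in Hypothesis \ref{hyp:main}(iii)), we have $\PP$-almost surely $\|L^w_t - L^w_s\|_{H^\lambda_x} \le C(\omega)|t-s|^\gamma$ for the admissible range of $(\gamma,\lambda)$, so once the convolution estimate is in place the result follows by taking the supremum over $(s,t) \in \Delta_2$.

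The analytic heart is then the bound
\[
\|g \ast h\|_{W^{\alpha+\lambda,\infty}_x} \lesssim \|g\|_{H^\alpha_x}\, \|h\|_{H^\lambda_x},
\]
for $g \in H^\alpha_x$ and $h \in H^\lambda_x$ with $\alpha,\lambda \in \RR$. I would establish this on the Fourier side: writing $\langle \xi\rangle := (1+|\xi|^2)^{1/2}$, one has $\widehat{g\ast h} = \hat g\, \hat h$, and
\[
\langle\xi\rangle^{\alpha+\lambda} |\hat g(\xi)\hat h(\xi)| = \big(\langle\xi\rangle^{\alpha}|\hat g(\xi)|\big)\big(\langle\xi\rangle^{\lambda}|\hat h(\xi)|\big),
\]
so that by Cauchy--Schwarz in $\xi$ the $L^1_\xi$ norm of the left-hand side is bounded by $\|\langle\cdot\rangle^{\alpha}\hat g\|_{L^2_\xi}\,\|\langle\cdot\rangle^{\lambda}\hat h\|_{L^2_\xi} = \|g\|_{H^\alpha_x}\|h\|_{H^\lambda_x}$ (up to the universal constants relating $H^\beta_x$ to $\|\langle\cdot\rangle^\beta\hat{\cdot}\|_{L^2_\xi}$). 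Since $(\II-\Delta)^{(\alpha+\lambda)/2}(g\ast h)$ has Fourier transform $\langle\xi\rangle^{\alpha+\lambda}\hat g\hat h \in L^1_\xi$, Fourier inversion gives $(\II-\Delta)^{(\alpha+\lambda)/2}(g\ast h) \in L^\infty_x$ with norm controlled by that $L^1_\xi$ norm, which is exactly $\|g\ast h\|_{W^{\alpha+\lambda,\infty}_x} \lesssim \|g\|_{H^\alpha_x}\|h\|_{H^\lambda_x}$. This is the content of the cited \cite[Lemma 1.4]{Bahouri2011}, so in the write-up I would simply invoke it rather than reprove it.

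Applying this with $h = L^w_t - L^w_s$ and using the H\"older bound from Proposition \ref{thm:localnondet} term by term yields, for every $0 \le s \le t \le T$,
\[
\|g\ast L^w_{s,t}\|_{W^{\alpha+\lambda,\infty}_x} \lesssim \|g\|_{H^\alpha_x}\,\|L^w_t - L^w_s\|_{H^\lambda_x} \le \|g\|_{H^\alpha_x}\,\|L^w\|_{C^\gamma_t H^\lambda_x}\,|t-s|^\gamma,
\]
and taking the supremum over $(s,t)$ gives $g \ast L^w \in C^\gamma_t W^{\alpha+\lambda,\infty}_x$ with the stated estimate; continuity in $t$ is inherited from the H\"older continuity of $t \mapsto L^w_t$ in $H^\lambda_x$. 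The only mild subtlety worth flagging is the bookkeeping of negative Sobolev indices --- $\alpha$, $\lambda$, and their sum may all be negative --- but the Fourier-side argument above is completely insensitive to the sign of the exponents, so no extra care beyond keeping $\langle\xi\rangle$ (rather than $|\xi|$) weights is needed. I do not expect any genuine obstacle here; the corollary is essentially a packaging of a standard Young inequality in Bessel spaces with the already-quoted local-time regularity.
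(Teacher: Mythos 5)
Your proposal is correct and matches the paper's approach: the paper derives the corollary in one line from Young's convolution inequality in Bessel potential spaces (citing \cite[Lemma 1.4]{Bahouri2011}), and your Fourier-side argument via Cauchy--Schwarz is precisely the standard proof of that inequality, combined with the H\"older-in-time bound on $L^w$ exactly as the paper intends. (Invoking Proposition \ref{thm:localnondet} is harmless but unnecessary, since $L^w\in C^\gamma_t H^\lambda_x$ is already a hypothesis of the corollary.)
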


\subsection{Stochastic integration}

An adapted continuous process $(x_t)_{t\in[0,T]}$ is a strong 
solution of the McKean--Vlasov equation 
\begin{align}\label{eq:mksde2}
x_t = x_0 + \int_0^t V(s, \mathcal{L}_{x_s})\dd s 
    + \int_0^t \sigma(s,\mathcal{L}_{x_s})\dd \beta_s
\end{align}
where $\mathcal{L}_{x(s)}:=Law(x_s)$, 
if the foregoing holds $\mathbb{P}$-a.s., and
$$
\EE \int_0^t |V(s,
\mathcal{L}_{x_s})| + |\sigma(s,\mathcal{L}_{x_s})|^2\dd s< \infty.
$$

An adaptation of a finite dimensional well-posedness theorem of
\cite[Theorem 1.2]{krylov1999kolmogorov} to McKean--Vlasov 
SDEs is given in \cite[Theorem 2.1]{10.1214/23-AAP2016} in 
terms of a variational distance on $\mathcal{P}_2(\RR^n)$. This 
variational distance was an adaptation of a distance-weighted 
total variation distance used, e.g., in \cite[Theorem 6.15]{villani_springer}.
A further extension \cite[Theorem 3.1]{10.1214/23-AAP2016} 
to the infinite dimensional setting adapted the monotonicty 
framework of Prev\^ot--Liu--R\"ockner \cite{PrevotRockner,LiuRockner}. 
As  \cite[Theorem 3.1]{10.1214/23-AAP2016}  was stated in the 
better known Wasserstein distance, we adapt its hypotheses 
in our well-posedness result for approximating SDEs below 
for easier reference on the readers' part.
 
Since our equation \eqref{eq:main intro} has coefficients 
independent of the process $x$ itself, requisite monotonocity 
and coercivity conditions are partially irrelevant in the 
present context. The conditions on $(V,\sigma)$ for the existence 
(and uniqueness) of strong solutions to \eqref{eq:main intro} are given in 
\cite[Section 3.1]{10.1214/23-AAP2016} in this simplified context as follows:
\begin{hyp}\label{hyp:mkv-sde_gwp}{\,}

\begin{itemize}
\item[(i)] For every $t \ge 0$, $V(t, \cdot)$, $\sigma(t,\cdot)$ 
    are continuous {\em as functions on the space of probability measures 
    with second moments}, i.e. continuous on $\mathcal{P}_2(\RR^n)$.
\item[(ii)] There exists a constant $C > 0$ 

    such that for any $t \ge 0$,
    and $\mu, \nu \in \mathcal{P}_\kappa(\RR^n)$, $\kappa \ge 2$, 
    \begin{align*}
       |V(t,\mu) - V(t,\nu)|^2 &+ 
        |\sigma(t,\mu) - \sigma(t,\nu)|^2 \\
        &\le C\big(1 + \mu(|\cdot|^\kappa) + \nu(|\cdot|^\kappa)\big)
	\WW_2^2(\mu,\nu).
    \end{align*}

\item[(iii)] There exists a $K \in L_t^1$ such that 
	for any $t \ge 0$, $x \in \RR^n$, $\mu \in \mathcal{P}_2(\RR^n)$, 
    $$
    V(t,\mu) \cdot x \le K(t) \big(1 + |x|^2 + \mu(|\cdot|^2)\big).
    $$
\item[(iv)] For any $t \ge 0$, $x \in \RR^n$, and 
    $\mu \in \mathcal{P}_\kappa(\RR^n)$, $\kappa \ge 2$, 
    with the same $K$ as in (iii),
    \begin{align*}
    |V(t,\mu)|^2 \le K(t)\big(1 + \mu(|\cdot|^\kappa)\big), 
    \qquad
    |\sigma(t,\mu)|^2 \le K(t)\big(1 + \mu(|\cdot|^2)\big).
    \end{align*}
\end{itemize}
   
\end{hyp}

We consider \eqref{eq:mksde2} with $V(s,\mu) 
    = b(s, F(\mu) - w_s)$ 
and $\sigma(s,\mu) = a(s, F(\mu) - w_s)$. 
We now construct approximations $(a_\ep, b_\ep)$ of 
$(a,b)$ so that the following McKean--Vlasov SDE has 
a unique strong solution: 
\begin{align}\label{eq:mckean-vlasov}
\dd x^\ep = b_\ep(s,F(\mu^\ep_s) - w_s) \dd t 
    + a_\ep(s, F(\mu^\ep_s) - w_s)\dd \beta_s
\end{align}
by choosing 
$$
\sigma_\ep(t,\mu) := a_\ep(t,F(\mu) - w_t),
\qquad
V_\ep(t,\mu) := b_\ep(t, F(\mu) - w_t),
$$
where $a_\ep(t,z) := a(t,\cdot)*J_\ep(z)$, and 
$b_\ep(t,z) := b(t,\cdot)*J_\ep(z)$, and $J_\ep$ is a Friedrichs mollifier 
on $\RR^k$. We can then verify conditions (i) -- (iv) 
for $V_\ep$ and $\sigma_\ep$.
\begin{lem}\label{lem:xepsilon}
For any $\ep > 0$, there exists a unique strong solution to the 
McKean--Vlasov equation \eqref{eq:mckean-vlasov}.
\end{lem}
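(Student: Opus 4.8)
The plan is to verify that the regularised coefficients $V_\ep(t,\mu)=b_\ep(t,F(\mu)-w_t)$ and $\sigma_\ep(t,\mu)=a_\ep(t,F(\mu)-w_t)$ satisfy conditions (i)--(iv) of Hypothesis \ref{hyp:mkv-sde_gwp}; granting this, existence and uniqueness of a strong solution to \eqref{eq:mckean-vlasov} follows directly from the ($x$-independent, finite-dimensional) specialisation of \cite[Theorem 3.1]{10.1214/23-AAP2016} recorded above. The verification rests on two elementary consequences of Friedrichs mollification on $\RR^k$. Since $a,b\in L^\infty_t L^2_x$ (here $x$ denotes the $\RR^k$-variable), Young's convolution inequality gives, uniformly in $t\in[0,T]$, $\norm{a_\ep(t,\cdot)}_{L^\infty_x}\le\norm{a(t,\cdot)}_{L^2_x}\norm{J_\ep}_{L^2_x}$ and $\norm{\nabla a_\ep(t,\cdot)}_{L^\infty_x}\le\norm{a(t,\cdot)}_{L^2_x}\norm{\nabla J_\ep}_{L^2_x}$, and likewise for $b_\ep$; hence $a_\ep,b_\ep$ are bounded --- say by a constant $M_\ep$ --- and globally Lipschitz in the spatial variable with a constant $C_\ep$ that is uniform in $t$ (neither $M_\ep$ nor $C_\ep$ is uniform in $\ep$, but that is not needed here). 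Moreover, since $a,b\in C^{\gamma_0}_t H^{-1}_x$ and convolution with $J_\ep\in H^1_x$ maps $H^{-1}_x$ continuously into $L^\infty_x$, the maps $t\mapsto a_\ep(t,\cdot)$ and $t\mapsto b_\ep(t,\cdot)$, valued in $L^\infty_x$, are $\gamma_0$-H\"older continuous, which in particular furnishes the joint measurability in $(t,\mu)$ needed to apply the cited theorem.

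I would then check the four conditions in turn. For (i): since $F\in{\rm Lip}(\cP_1(\RR^n))$ and $\WW_1\le\WW_2$ on $\cP_2(\RR^n)$, the map $\mu\mapsto F(\mu)-w_t$ is Lipschitz from $\cP_2(\RR^n)$ to $\RR^k$ for each fixed $t$; composing it with the spatially Lipschitz functions $a_\ep(t,\cdot)$ and $b_\ep(t,\cdot)$ shows that $\sigma_\ep(t,\cdot)$ and $V_\ep(t,\cdot)$ are continuous on $\cP_2(\RR^n)$. For (ii): using \eqref{eq:1wasserstein_use}, $\abs{V_\ep(t,\mu)-V_\ep(t,\nu)}+\abs{\sigma_\ep(t,\mu)-\sigma_\ep(t,\nu)}\le 2C_\ep\abs{F(\mu)-F(\nu)}\le 2C_\ep\abs{F}_{\rm Lip}\WW_2(\mu,\nu)$, whence squaring and using $1+\mu(\abs{\cdot}^\kappa)+\nu(\abs{\cdot}^\kappa)\ge1$ gives (ii) with $C=8C_\ep^2\abs{F}_{\rm Lip}^2$. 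For (iii)--(iv): boundedness yields $V_\ep(t,\mu)\cdot x\le M_\ep\abs{x}\le M_\ep(1+\abs{x}^2+\mu(\abs{\cdot}^2))$, and $\abs{V_\ep(t,\mu)}^2\le M_\ep^2(1+\mu(\abs{\cdot}^\kappa))$, $\abs{\sigma_\ep(t,\mu)}^2\le M_\ep^2(1+\mu(\abs{\cdot}^2))$, so (iii) and (iv) hold with the constant (hence $L^1_t$) function $K\equiv\max(M_\ep,M_\ep^2)$.

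These steps are essentially routine, so I do not anticipate a genuine obstacle in this lemma. The only point requiring any care is bookkeeping: matching the general monotonicity and coercivity framework of \cite{10.1214/23-AAP2016} to the simplified hypotheses used here, which is legitimate precisely because $a_\ep,b_\ep$ are independent of $x$, so the monotonicity and coercivity requirements there collapse to conditions (ii)--(iv). The real difficulty of the paper lies downstream: the constants $M_\ep$ and $C_\ep$ blow up as $\ep\downarrow0$, so Lemma \ref{lem:xepsilon} only provides a family of approximate solutions, and passing to a genuine solution of \eqref{eq:main intro} will require the uniform-in-$\ep$ estimates coming from pathwise regularisation together with the tightness and martingale-problem arguments of Sections \ref{sec:tightness} and \ref{sec:lim}.
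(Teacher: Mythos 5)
Your proposal is correct and follows essentially the same route as the paper: mollification renders $a_\ep,b_\ep$ bounded and spatially Lipschitz (with $\ep$-dependent constants) via Young's convolution inequality, the Lipschitz property of $F$ together with $\WW_1\le\WW_2$ yields conditions (i)--(iv) of Hypothesis \ref{hyp:mkv-sde_gwp}, and the conclusion is read off from \cite[Theorem 3.1]{10.1214/23-AAP2016}. Your additional remarks on time-regularity/measurability and on the non-uniformity of the constants in $\ep$ are consistent with, and slightly more explicit than, the paper's own argument.
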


\begin{proof}
We verify conditions (i) to (iv) in Hypothesis \ref{hyp:mkv-sde_gwp}. 
Let  $K(t) = K:=C(\|a\|_{L^\infty_tL^p_x}^2 
    +\|b\|_{L^\infty_tL_x^p(\RR^k)})$\footnote{To be noticed, here  $\|\cdot\|_{L^\infty_tL^p_x}:=\|\cdot\|_{L^\infty_tL^p(\RR^k)}$. }, for any $t\in[0,T]$. 
 For $\frac{1}{q}:=1-\frac{1}{p}$, then 
$|\pd_y a_\ep(t, y)| = \big|\big(\pd_y J_\ep* a(t,\cdot) \big)(y)\big|
\le \|\pd_x J_\ep\|_{L^q_x}\|a\|_{L^\infty_t L^p_x}$ from Young's inequality for convolution, and therefore 
$a_\ep$ is globally Lipschitz in its second entry 
(with $\ep$-dependent Lipschitz constant). For $\mu, \nu 
	\in \mathcal{P}_2(\RR^n)$,  by the Lipschitz condition on 
$F$ from Hypothesis \ref{hyp:main} and \eqref{eq:1wasserstein_use}, 
\begin{align*}
|\sigma_\ep(t,\mu) - \sigma_\ep(t,\nu)|^2
& \le \|\pd_x J_\ep * a(t,\cdot)\|_{L^\infty_x}^2 \big| F(\mu) - F(\nu)\big|^2
\le KC_\ep \WW_1^2(\mu,\nu).
\end{align*}
Since $\WW_1(\mu,\nu) \le \WW_2(\mu,\nu)$, this calculation
verifies (ii), as well as (i) of Hypothesis \ref{hyp:mkv-sde_gwp} for $\sigma$. The same 
calculation with $V$ in place of $\sigma$ will verify (i) for $V$.

Again, by Young's convolution inequality, 
\begin{align*}
|V_\ep(t,\mu) \cdot x| 
     = |b_\ep( t, F(\mu)- w_t)\cdot x| 
     \le C_\ep
     \|b\|_{L^\infty_t L^p_x} \big(1 + |x|^2\big)\le C_{\ep}
     K \big(1 + |x|^2\big).
\end{align*}
This verifies (iii) of Hypothesis \ref{hyp:mkv-sde_gwp} .

Furthermore, for (iv), we have 
\begin{align*}
    |\sigma_\ep(t,\mu)|^2
    &\le \|J_\ep\|_{L^q_x}^2 \|a\|_{L^\infty_t L^p_x}^2 \lesssim 1, \quad
 |V_\ep(t,\mu) |^2 
     &=\|J_\ep\|_{L^q_x}^2 \|b\|_{L^\infty_t L^p_x}^2  \lesssim 1.
\end{align*}
Therefore, we have the lemma using \cite[Theorem 3.1]{10.1214/23-AAP2016}.
\end{proof}

In the following we show that under the regularisation of path $w$ the terms appearing in \eqref{eq:transformed} are well-defined, even though $b$ and $a$ are very singular. The idea is to apply the sewing lemma, Lemma \ref{lem:sewing}. 
\begin{lem}\label{thm:sewing1}

Fix $p \ge 2$. Assume Hypothesis  \ref{hyp:main} holds for some $p$, 
$\gamma_0$ and  $\zeta_0$. Let $\gamma_1 \in (0,1)$ satisfying
\eqref{con.lem-gamme1}.
Finally, let $X$ be a process for which 
$$
\sup_{s \not= t} \frac{\EE \abs{X_{s,t}}^p}
    {\abs{s - t}^{p \gamma_1/2}} < \infty. 
$$
Then the germs defined for $(s,t)\in\Delta_2$
\begin{align*}
(A_1)_{s,t} &:=  \int_s^t b (s,F(\mu_s) - w_r) \dd r 
    = b(s,\cdot)\ast L^w_{s,t}(F(\mu_s))\\
(A_2)_{s,t} &:= \int_s^t  \sum_{i,j} 
    a_{ij}^2(s, F(\mu_s) - w_r) \dd r 
    = a^2_{ij}(s,\cdot)\ast L^w_{s,t}(F(\mu_s)) 
\end{align*}
respectively admit sewings $\mathcal{I}A_1$ and $\mathcal{I} A_2$, 
and for $i = 1,2$, $ t \in [0,T]$, $(\mathcal{I}A_i)_{0,t} = (A_i)_{0,t}$.
\end{lem}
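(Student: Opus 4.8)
\emph{Approach.} The plan is to verify the hypotheses of the sewing lemma (Lemma \ref{lem:sewing}) for each germ. First I would reduce the two cases to one: since $|a|^2=\sum_{i,j}a_{ij}^2\in L^\infty_tL^2_x\cap C^{\gamma_0}_tH^{-1}_x$ by Hypothesis \ref{hyp:main}(i), it suffices to treat a generic $c\in L^\infty_tL^2_x\cap C^{\gamma_0}_tH^{-1}_x$ (taking $c=b$ for $A_1$ and $c=|a|^2$ for $A_2$) and the germ
\[
(A)_{s,t}:=\int_s^t c(s,F(\mu_s)-w_r)\,\dd r=\bigl(c(s,\cdot)\ast L^w_{s,t}\bigr)(F(\mu_s)),\qquad(s,t)\in\Delta_2.
\]
Here $\mu_s:=\cL(X_s)\in\cP_1(\RR^n)$ is a \emph{deterministic} flow of measures, so $F(\mu_s)\in\RR^k$ is deterministic and $A$ depends on $\omega$ only through $L^{w(\omega)}$. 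Hypothesis \ref{hyp:main}(iii) forces $\zeta_0<2/k$, so I would invoke Proposition \ref{thm:localnondet}: on a $\PP$-full set of $\omega$ one has $L^w\in C^\gamma_tH^\lambda_x$ whenever $\gamma<1-(\lambda+\tfrac k2)\zeta_0$. This, together with the embedding $L^2_x\hookrightarrow H^{-1}_x$ and Corollary \ref{thm:localnondet_cor}, will be used throughout.

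\emph{Increment bound.} Using $\|c(s,\cdot)\|_{H^{-1}_x}\le\|c\|_{L^\infty_tL^2_x}$ and Corollary \ref{thm:localnondet_cor} (integrand in $H^{-1}_x$, $L^w\in C^\gamma_tH^1_x$) I would bound
\[
|(A)_{s,t}|\le\bigl\|c(s,\cdot)\ast L^w_{s,t}\bigr\|_{L^\infty_x}\lesssim\|c\|_{L^\infty_tL^2_x}\,\|L^w\|_{C^\gamma_tH^1_x}\,|t-s|^{\gamma}
\]
for any $\gamma<1-(1+\tfrac k2)\zeta_0$ (positive, by Hypothesis \ref{hyp:main}(iii)). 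With $(A)_{t,t}=0$ this gives $\|A\|_{\gamma}<\infty$, so the first sewing hypothesis holds with $\alpha:=\gamma\wedge1$.

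\emph{Germ bound.} For $s\le u\le t$, additivity of the local time yields
\[
(\delta A)_{s,u,t}=\int_u^t\bigl[c(s,F(\mu_s)-w_r)-c(u,F(\mu_u)-w_r)\bigr]\,\dd r,
\]
and I would split this, via $\pm\,c(u,F(\mu_s)-w_r)$, into a \emph{time-increment} part $\bigl[(c(s,\cdot)-c(u,\cdot))\ast L^w_{u,t}\bigr](F(\mu_s))$ and a \emph{measure-increment} part $\bigl(c(u,\cdot)\ast L^w_{u,t}\bigr)(F(\mu_s))-\bigl(c(u,\cdot)\ast L^w_{u,t}\bigr)(F(\mu_u))$. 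The first is $\lesssim\|c\|_{C^{\gamma_0}_tH^{-1}_x}|s-u|^{\gamma_0}|t-u|^{\gamma}\le|t-s|^{\gamma_0+\gamma}$ by Corollary \ref{thm:localnondet_cor}, and since $\gamma_0>(1+\tfrac k2)\zeta_0$ one can take $\gamma<1-(1+\tfrac k2)\zeta_0$ with $\gamma_0+\gamma>1$. For the second I would fix $\theta\in(0,1)$, use Corollary \ref{thm:localnondet_cor} with $L^w\in C^{\gamma'}_tH^{1+\theta}_x$ to get $\|c(u,\cdot)\ast L^w_{u,t}\|_{C^\theta_x}\lesssim|t-u|^{\gamma'}$ for $\gamma'<1-(1+\theta+\tfrac k2)\zeta_0$, and exploit the H\"older-in-$L^p$ regularity of $X$: by Hypothesis \ref{hyp:main}(ii), Kantorovich--Rubinstein duality and the moment hypothesis,
\[
|F(\mu_s)-F(\mu_u)|\le|F|_{\rm Lip}\WW_1(\mu_s,\mu_u)\le|F|_{\rm Lip}\,\EE|X_{s,u}|\le|F|_{\rm Lip}\bigl(\EE|X_{s,u}|^p\bigr)^{1/p}\lesssim|s-u|^{\gamma_1/2},
\]
so the measure-increment part is $\lesssim|t-u|^{\gamma'}|s-u|^{\theta\gamma_1/2}\le|t-s|^{\gamma'+\theta\gamma_1/2}$. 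Here the interlocking inequalities of Hypothesis \ref{hyp:main}(iii) and \eqref{con.lem-gamme1} are precisely what lets one choose $\theta\in(0,1)$ and $\gamma'$ below its threshold with $\gamma'+\theta\gamma_1/2>1$. Taking $\beta$ to be the smaller of the two exponents, $\|\delta A\|_\beta<\infty$ with $\beta>1$, so $A\in C^{\alpha,\beta}_2$ and Lemma \ref{lem:sewing} produces $\mathcal{I}A\in C^\alpha_tE$ with $(\mathcal{I}A)_0=0$; applying this to $A_1,A_2$ gives the sewings. Finally, for the identification $(\mathcal{I}A_i)_{0,t}=(A_i)_{0,t}$ — that $\mathcal{I}A_i$ is the rigorous meaning of the corresponding integral in \eqref{eq:transformed} — I would run the same estimates (uniformly in $\ep$) for the mollifications $b_\ep,|a|^2_\ep$ of Lemma \ref{lem:xepsilon}: there the germ differs from the increment of the honest Lebesgue integral $t\mapsto\int_0^t c_\ep(r,F(\mu_r)-w_r)\,\dd r$ by a remainder of order $|t-s|^{1+\min(\gamma_0,\gamma_1/2)}$, so uniqueness in Lemma \ref{lem:sewing} identifies the two, and one passes $\ep\downarrow0$.

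\emph{Main obstacle.} The hard part will be the germ bound $\|\delta A\|_\beta<\infty$ with $\beta>1$, and within it the measure-increment part: the sole regularity gain available in the short time-increment $|s-u|$ is the H\"older-in-$L^p$ exponent $\gamma_1/2$ of $X$, transported to $F(\mu)$ through the Lipschitz map and $\WW_1$, and it has to be balanced against the temporal H\"older exponent lost when one demands positive spatial smoothness of the averaged coefficient $c(u,\cdot)\ast L^w$. Arranging a surplus is exactly the role of the coupled conditions in Hypothesis \ref{hyp:main}(iii) and \eqref{con.lem-gamme1}; the remaining steps are routine applications of Corollary \ref{thm:localnondet_cor} and Young's convolution inequality.
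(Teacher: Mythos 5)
Your overall strategy coincides with the paper's: verify the sewing hypotheses for a generic coefficient $c\in L^\infty_tL^2_x\cap C^{\gamma_0}_tH^{-1}_x$, split $\delta A$ into a time-increment part (controlled by $\|c(s,\cdot)-c(u,\cdot)\|_{H^{-1}_x}\|L^w_{u,t}\|_{H^1_x}$) and a measure-increment part (controlled by transporting $\WW_1(\mu_s,\mu_u)\lesssim|s-u|^{\gamma_1/2}$ through the Lipschitz map $F$ and the spatial regularity of $c(u,\cdot)*L^w_{u,t}$), and then identify the sewing with the genuine integral. The time-increment estimate is exactly the paper's, and your identification via mollification and uniqueness of the sewing is workable, though the paper argues more directly by comparing with the additive germ $\tilde A_{s,t}=\int_s^t c(r,F(\mu_r)-w_r)\dd r$, which is its own sewing.

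There is, however, a genuine gap in the numerology of your measure-increment estimate. You measure $c(u,\cdot)$ in $H^{-1}_x$ and demand $L^w\in C^{\gamma'}_tH^{1+\theta}_x$, which by Proposition \ref{thm:localnondet} forces $\gamma'<1-(1+\theta+\frac k2)\zeta_0$, while the spatial gain from $\|c(u,\cdot)*L^w_{u,t}\|_{C^\theta_x}$ is only $|s-u|^{\theta\gamma_1/2}$. The sewing condition $\gamma'+\theta\gamma_1/2>1$ then requires $\theta\gamma_1/2>(1+\theta+\frac k2)\zeta_0$ for some $\theta\in(0,1)$. The difference of the two sides, $\theta(\gamma_1/2-\zeta_0)-(1+\frac k2)\zeta_0$, is affine and increasing in $\theta$ (since $\gamma_1/2>\zeta_0$), so its supremum over $\theta\in(0,1)$ equals $\gamma_1/2-(2+\frac k2)\zeta_0$; your argument therefore needs $\gamma_1/2>(2+\frac k2)\zeta_0$, which is strictly stronger than \eqref{con.lem-gamme1} and fails for $\gamma_1/2\in\big((1+\frac k2)\zeta_0,(2+\frac k2)\zeta_0\big]$ --- precisely the range the lemma must cover for its use in Lemma \ref{thm:Cgamma_0_tightness} and Theorem \ref{thm:main_result}. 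The fix is to stop paying a full extra derivative of $L^w$ to compensate the $H^{-1}_x$ norm of $c$: in this term use $\|c(u,\cdot)\|_{L^2_x}$ together with $L^w\in C^\gamma_tH^1_x$, so that $c(u,\cdot)*L^w_{u,t}\in W^{1,\infty}_x$ for any $\gamma<1-(1+\frac k2)\zeta_0$ and the full Lipschitz gain $|F(\mu_s)-F(\mu_u)|\lesssim|s-u|^{\gamma_1/2}$ is harvested; then $\gamma+\gamma_1/2>1$ is exactly \eqref{con.lem-gamme1}. This is estimate \eqref{eq:Lipschitz_Ef2} in the paper's proof.
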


\begin{proof}

Let us focus on $A_1$ as the argument for $A_2$ can be made  
along similar lines. We can  estimate the coboundary: for any $(s,\tau,t)\in\Delta_3$
\begin{equation}\label{eq:A_1_coboundary}
\begin{aligned}
\big|(\delta A_1)_{s,\tau,t}\big|
&\le \bigg|\int_\tau^t b(s,F(\mu_s) - w_r) 
    - b(\tau,F(\mu_\tau) - w_r) \dd r\bigg|\\
&=  \big| b(s)*L^w_{\tau,t}(F(\mu_s)) - b(\tau)*L^w_{\tau,t}(F(\mu_s)) \\
&\qquad\quad\,\, + b(\tau)*L^w_{\tau,t}(F(\mu_s)) 
    - b(\tau)*L^w_{\tau,t}(F(\mu_\tau)) \big|\\
&\le  \|b(s, \cdot) - b(\tau, \cdot)\|_{H^{-1}_x}\|L^w_{\tau,t}\|_{H^1_x} \\
&\qquad\quad\,\, + \|b(\tau)*L^w_{\tau,t}\|_{W^{1,\infty}_x} 
    \big|F(\mu_s) - F(\mu_\tau)\big|.
\end{aligned}
\end{equation}

Using the Lipschitz bound \eqref{eq:1wasserstein_use} on $F$, we get 
\begin{equation}\label{eq:Lipschitz_Ef1}
\begin{aligned}
\big|F(\mu_s) - F(\mu_\tau)\big|
&\le |F|_{\rm Lip} \EE |X_{s,\tau}|\\
&\le |F|_{\rm Lip} \frac{ \EE |X_{s,\tau}|}{|s - \tau|^{ \gamma_1/2}}|\tau - s|^{ \gamma_1/2} \lesssim |\tau - s|^{ \gamma_1/2}.
\end{aligned}
\end{equation}
By Young's convolution inequality and Corollary \ref{thm:localnondet_cor}, 
for $\gamma < 1 - (1 + k/2) \zeta_0$. 
\begin{align}\label{eq:Lipschitz_Ef2}
\|b(\tau)*L^w_{t,\tau}\|_{W^{1,\infty}_x} 
\le \|b(\tau)\|_{L^2_x}\|L^w_{t,\tau}\|_{H^1_x}
\lesssim \|b(\tau)\|_{L^2_x} \|L^w\|_{C^\gamma_t H^1_x}|t - \tau|^\gamma.
\end{align}
Additionally using Hypothesis \ref{hyp:main} on the 
continuity of $b(s)$ in $H^{-1}_x$,
\begin{align}\label{eq:Lipschitz_Ef3}
\|b(s, \cdot) - b(\tau, \cdot)\|_{H^{-1}_x}\|L^w_{\tau,t}\|_{H^1_x} 
\le |\tau - s|^{\gamma_0} \|L^w\|_{C^\gamma_t H^1_x}|t - \tau|^{\gamma}.
\end{align}
By the assumption $\gamma_0\wedge \frac{1}{2} > (1 + k/2)\zeta_0$ in Hypothesis \ref{hyp:main} 
together with the assumption $\gamma_1/2 > (1 + k/2)\zeta_0$ from \eqref{con.lem-gamme1}, 
we are able to choose $\gamma$ in the appropriate range so that
$\gamma \in (1 - (\gamma_1/2 \wedge \gamma_0), 1 - (1 +k/2)\zeta_0)$. 
This ensures that $\gamma_1/2 + \gamma, \gamma_0 + \gamma > 1$.
Hence upon inserting all the preceding estimates back into 
\eqref{eq:A_1_coboundary},
$$
\big|(\delta A_1)_{s,\tau,t}\big| = o(|s - t|).
$$
The standard sewing lemma then ensures a sewing.

We now identify this sewing. The alternative germ 
$$
(\tilde{A}_1)_{s,t} =  \int_s^t b(r, F(\mu_r) - w_r) \dd r
$$
(with $F(\mu)$ evaluated at $r$ instead of $s$) trivially 
admits itself as a sewing, since $(\delta \tilde{A}_1)_{s,u,t} \equiv 0$.
The difference between $A_1$ and $\tilde{A}_1$ is given 
similarly as in \eqref{eq:A_1_coboundary} by 
\begin{align*}
\big|(A_1)_{s,t} - (\tilde{A}_1)_{s,t} \big|
&=\bigg| \int_s^t  b(r,F(\mu_s) - w_r) 
    - b(r,F(\mu_r) - w_r) \dd r\bigg| \\
&\lesssim |t - s|^{1 + \gamma_1/2}.
\end{align*}

By the sewing lemma, 
there exists a sewing $\mathcal{I}A_1$ such that 
$\big|\mathcal{I} A_1)_{s,t} - (A_1)_{s,t}\big|
    \lesssim \big|(\delta A_1)_{s,u,t}\big|$.
Therefore, 
\begin{align*}
 \big|(\mathcal{I} A_1)_{s,t} - (\tilde{A}_1)_{s,t}\big|
&\le \big| (\mathcal{I} A_1)_{s,t} - (A_1)_{s,t}\big| 
    +  \big|(A_1)_{s,t} - (\tilde{A}_1)_{s,t}\big| \\
& \lesssim |t - s|^{1 + \gamma_1/2}.
\end{align*}

This implies that for any fixed $s$ (in particular, 
$s = 0$), $(\mathcal{I} A_1)_{s,t} = (\tilde{A}_1)_{s,t}$. 

\end{proof}
\begin{rem}\label{rem:remark2.9}
In \eqref{eq:A_1_coboundary} and subsequently, we chose to bound $b*L^w_{\tau,t}$ 
in $W^{1,\infty}$, given Hypothesis \ref{hyp:main} on various indices 
$\gamma_0$, $\gamma_1$, and $\zeta_0$. As alluded to in Remark 
\ref{rem:optimalnum} it would have been possible also to 
estimate $b*L^w_{\tau,t}$ in $W^{\alpha, \infty}$ for an $\alpha \in (0,1)$. 
We discuss how this changes the numerology vis-\`a-vis $\zeta_0$, $\gamma_0$, 
and $\gamma_1$ here. 

By estimating $b*L^w_{\tau,t}$ in $W^{\lambda, \infty}$, by Proposition 
\ref{thm:localnondet}, we get
$$
\|b*L^w_{\tau,t}\|_{W^{\lambda, \infty}_x} 
\le \|b\|_{L^2_x}\|L^w_{s,t}\|_{H^{\lambda}_x}
\le \|b\|_{L^2_x}\|L^w\|_{C^\gamma_tH^{\lambda}_x}  |t - s|^\gamma
$$
in place of \eqref{eq:Lipschitz_Ef2}.
In order for $L^w$ to be bounded in $C^\gamma_t H^\lambda_x$, we would 
require \eqref{eq:general_localnondet_condition} in place of 
$\gamma > \big(1 + \frac{k}2\big)\zeta_0$. On the other hand, 
the bound on \eqref{eq:A_1_coboundary} becomes 
\begin{align*}
\lesssim |t - \tau|^\gamma |\tau - s|^{\lambda\gamma_1/2} 
    + |\tau - s|^{\gamma_0}|t - \tau|^\gamma
\end{align*}
following \eqref{eq:Lipschitz_Ef1} and \eqref{eq:Lipschitz_Ef3}.
In order for the sewing argument to work, we shall then require that 
$\gamma + \lambda \gamma_1 /2, \gamma + \gamma_0 > 1$.

\end{rem}

With the above sewing results at hand, we obtain the following result for a generalised It\^o isometry as in \cite[Lemma 3.3]{bechtold}.
\begin{lem}
   Suppose $\{w_t\}_{t\in [0,T]}$ is such that  its associated local time is contained in $C^\gamma_t H^\kappa_x$ for some $\kappa>0$ and $\gamma\in (\frac{1}{2},1]$.  Then the following It\^o isometry holds 
   \begin{equation}
       \EE\left[\left( \int_s^t a^\epsilon (r, F(\mu_r)-w_r)\dd \beta_r \right)^2\right] = \|(\cI A_2^\epsilon)_{s,t}\|_{L^2(\Omega)}^2 
   \end{equation}
   where $(\cI A_2^\epsilon)_t:= \int_0^t  \sum_{i,j} 
    a_{\ep,ij}^2(s, F(\mu_s) - w_r) \dd r 
    = a^2_{\ep,ij}(s)\ast L^w_{0,t}(F(\mu_s)) $.
\end{lem}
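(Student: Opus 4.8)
The statement is a generalized Itô isometry: the $L^2(\Omega)$ norm of the stochastic integral $\int_s^t a^\ep(r, F(\mu_r) - w_r)\,\dd\beta_r$ equals the $L^2(\Omega)$ norm of the sewing $(\mathcal{I}A_2^\ep)_{s,t}$ built from the squared diffusion coefficient. The natural route is: (1) since $a_\ep$ is a bounded, smooth-in-space coefficient (being a mollification of an $L^\infty_t L^p_x$ function), the classical Itô isometry applies verbatim, giving $\EE[(\int_s^t a^\ep(r,F(\mu_r)-w_r)\dd\beta_r)^2] = \EE\int_s^t \sum_{i,j} a_{\ep,ij}^2(r, F(\mu_r) - w_r)\,\dd r$; (2) recognize the right-hand side as $\EE[(A_2^\ep)_{s,t}]$ where $A_2^\ep$ is the germ with time-frozen law (i.e.\ with $\mu_r$ replaced by $\mu_s$ inside), plus an error; and (3) invoke Lemma \ref{thm:sewing1} (applied to $a_\ep$, whose $H^{-1}_x$-Hölder-in-time and $L^2_x$ bounds are inherited from those of $a$, $|a|^2$ via Young's inequality, so Hypothesis \ref{hyp:main}(i) transfers to $a_\ep$ uniformly in $\ep$) to conclude that the time-frozen germ $A_2^\ep$ and its sewing $\mathcal{I}A_2^\ep$ agree on intervals $[0,t]$ — but here we need them to agree on a general interval $[s,t]$, which requires a small additional argument.

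First I would make precise that $a_\ep(\cdot, F(\cdot) - w_\cdot)$ is a bona fide $\cF_t$-progressively measurable, square-integrable integrand: by Lemma \ref{lem:xepsilon} the process $x^\ep$ (hence $\mu^\ep_r = \cL(x^\ep_r)$) is well-defined, $r \mapsto F(\mu_r)$ is continuous (since $F$ is Lipschitz and $r \mapsto \mu_r$ is continuous in $\WW_1$), $w$ is continuous, and $a_\ep$ is continuous and bounded; so the ordinary Itô isometry
\begin{equation*}
\EE\Big[\Big(\int_s^t a^\ep(r, F(\mu_r)-w_r)\,\dd\beta_r\Big)^2\Big] = \EE\int_s^t \sum_{i,j} a_{\ep,ij}^2(r, F(\mu_r)-w_r)\,\dd r = \EE\big[(\tilde A_2^\ep)_{s,t}\big]
\end{equation*}
holds, where $(\tilde A_2^\ep)_{s,t} := \int_s^t \sum_{i,j} a_{\ep,ij}^2(r, F(\mu_r)-w_r)\,\dd r$ is exactly the ``$\tilde A$'' germ of the Lemma \ref{thm:sewing1} proof applied to the coefficient $\sum_{i,j}a_{\ep,ij}^2$. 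This $\tilde A_2^\ep$ is itself a sewing (its coboundary vanishes identically), and by the uniqueness of sewings it coincides with $\mathcal{I}A_2^\ep$ wherever the latter exists as an increment — in particular $(\mathcal{I}A_2^\ep)_{s,t} = (\tilde A_2^\ep)_{s,t}$ for all $(s,t)\in\Delta_2$, not merely for $s=0$. (The statement $(\mathcal{I}A_1)_{s,t}=(\tilde A_1)_{s,t}$ for fixed $s$ in Lemma \ref{thm:sewing1} is really a statement for all $(s,t)$ once one notes the bound $|(\mathcal{I}A_1)_{s,t}-(\tilde A_1)_{s,t}|\lesssim|t-s|^{1+\gamma_1/2}$ holds uniformly, and an increment that is $o(|t-s|)$ and additive must vanish.) Taking $L^2(\Omega)$ norms throughout then yields $\EE[(\int_s^t a^\ep\,\dd\beta_r)^2] = \EE[(\mathcal{I}A_2^\ep)_{s,t}] = \|(\mathcal{I}A_2^\ep)_{s,t}\|_{L^2(\Omega)}^2$, where the last equality uses that $(\mathcal{I}A_2^\ep)_{s,t} = (\tilde A_2^\ep)_{s,t} \ge 0$ is nonnegative and deterministic as an $\omega$-wise quantity (it is $a_{\ep,ij}^2(s,\cdot)*L^w_{s,t}(F(\mu_s))$, built from deterministic data since $\mu$ is a flow of deterministic laws), so its $L^2(\Omega)$ norm is just its value; more carefully, the right side of the Itô isometry is already deterministic because the integrand $a_{\ep}^2(r,F(\mu_r)-w_r)$ is a deterministic function of $r$, so $\EE$ is superfluous and both sides equal $(\tilde A_2^\ep)_{s,t} = (\mathcal{I}A_2^\ep)_{s,t}$, whence also $= \|(\mathcal{I}A_2^\ep)_{s,t}\|_{L^2}^2$.

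The hypotheses of Lemma \ref{thm:sewing1} I need to check for $a_\ep$ are that $a_\ep$, $|a_\ep|^2 \in L^\infty_t L^2_x \cap C^{\gamma_0}_t H^{-1}_x$: the $L^\infty_t L^2_x$ bound follows from $\|a_\ep(t,\cdot)\|_{L^2_x} = \|J_\ep * a(t,\cdot)\|_{L^2_x} \le \|a(t,\cdot)\|_{L^2_x}$ (and similarly for $|a_\ep|^2 = (J_\ep * a)^2$, using the $L^4_x$ control of $a$ noted after Hypothesis \ref{hyp:main}), and the $C^{\gamma_0}_t H^{-1}_x$ bound follows since mollification in the spatial variable commutes with the time increment and is a bounded operation on $H^{-1}_x$; likewise the required $C^\gamma_t H^\kappa_x$ regularity of $L^w$ is assumed in the statement with $\gamma \in (1/2,1]$ (which is exactly what the sewing lemma, Lemma \ref{lem:sewing}, needs: $\alpha = \gamma > 1/2$ and $\beta = \gamma_0 + \gamma$ or $\gamma_1/2 + \gamma$ exceeding $1$). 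I do not anticipate a serious obstacle: the argument is essentially a bookkeeping exercise combining the classical Itô isometry with the already-established identification of the sewing. The one point deserving care — and the ``main obstacle'' in the sense of being the only non-mechanical step — is the passage from ``the sewing equals the frozen germ on $[0,t]$'' to ``on every $[s,t]$'': this is handled by the observation above that the discrepancy $(\mathcal{I}A_2^\ep)_{s,t} - (\tilde A_2^\ep)_{s,t}$ is an additive (in the sense of vanishing coboundary) increment of regularity $1 + \gamma_1/2 > 1$, hence identically zero, exactly as in the Bechtold--Hofmanova argument \cite[Lemma 3.3]{bechtold}.
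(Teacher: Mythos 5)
The paper gives no proof of this lemma at all: it is stated with a bare pointer to \cite[Lemma 3.3]{bechtold}, so your reconstruction can only be measured against that intended argument. Your route --- apply the classical It\^o isometry to the mollified integrand (which, as you correctly note, is not merely progressively measurable but actually a deterministic function of $r$, since $\mu_r$ is a law and $w$ is a deterministic path), recognise the resulting Lebesgue integral as the zero-coboundary germ $\tilde A_2^\ep$, and identify it with $\mathcal{I}A_2^\ep$ on every interval $[s,t]$ by the uniqueness of additive increments that are $o(|t-s|)$ --- is exactly the Bechtold--Hofmanov\'a argument, and the body of your proof is sound, including the transfer of Hypothesis \ref{hyp:main}(i) to $a_\ep$ uniformly in $\ep$ and the extension from $s=0$ to general $s$.

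The step that fails is the very last equality, $\EE\big[(\mathcal{I}A_2^\ep)_{s,t}\big] = \|(\mathcal{I}A_2^\ep)_{s,t}\|_{L^2(\Omega)}^2$. You have just established that $c := (\mathcal{I}A_2^\ep)_{s,t} = (\tilde A_2^\ep)_{s,t} \ge 0$ is deterministic; but then $\EE[c] = c$ while $\|c\|_{L^2(\Omega)}^2 = c^2$, and these coincide only when $c \in \{0,1\}$. Your own justification (``its $L^2(\Omega)$ norm is just its value'') yields $\|c\|_{L^2(\Omega)} = c$, not $\|c\|_{L^2(\Omega)}^2 = c$. What your argument actually proves is
\[
\EE\Big[\Big|\int_s^t a^\ep(r,F(\mu_r)-w_r)\,\dd\beta_r\Big|^2\Big] \;=\; (\mathcal{I}A_2^\ep)_{s,t} \;=\; \EE\big[(\mathcal{I}A_2^\ep)_{s,t}\big],
\]
which is the form the identity takes in \cite[Lemma 3.3]{bechtold} (there the right-hand side is the expectation of the sewing, equivalently its $L^1(\Omega)$ norm, the solution process being genuinely random in that setting). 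The exponent $2$ on the $L^2(\Omega)$ norm in the statement as printed is evidently a transcription slip --- most plausibly the norm was meant to sit on the stochastic integral on the left. You should have flagged this and proved the corrected identity, rather than asserting, with an argument that does not support it, an equality that is false for the deterministic quantity you had just computed.
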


\section{Tightness}
\label{sec:tightness}
The following result will ensure compactness of the law of approximating solutions $\mu^\epsilon$ of \eqref{eq:mckean-vlasov}
and simultaneously verify the conditions of Lemma \ref{thm:sewing1}.
\begin{lem}\label{thm:Cgamma_0_tightness}
Suppose Hypothesis \ref{hyp:main} holds for some 
$\zeta_0,\gamma_0$. Let $x^\ep$ be the unique strong solution to the SDE 
\eqref{eq:mckean-vlasov}. Then

 for any $p\ge 0$, there exists 
 $\gamma_1>0$ satisfying  $\frac{\gamma_1}2 > \Big(1 + \frac{k}2\big) \zeta_0$ so that uniformly on $\epsilon>0$,
\begin{align}\label{eq:xep_tightness}
\sup_{s \not= t} \EE \frac{ \abs{ x^\ep_{s,t}}^p}
    {\abs{t - s}^{p\gamma_1/2}} 
\lesssim_p 1, \quad 
\EE \abs{ x^\ep_t}^p \lesssim_p 1.
\end{align}
In particular, there is a version of $x^\ep$ with a.s.~$C^{\gamma_1/2 -}$ paths.
\end{lem}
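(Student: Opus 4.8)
The plan is to bound the moments of the increments $x^\ep_{s,t} := x^\ep_t - x^\ep_s$ uniformly in $\ep$ by splitting them into drift and martingale parts,
\[
x^\ep_{s,t} = \underbrace{\int_s^t b_\ep\big(r, F(\mu^\ep_r) - w_r\big)\,\dd r}_{=:\,D^\ep_{s,t}} \;+\; \underbrace{\int_s^t a_\ep\big(r, F(\mu^\ep_r) - w_r\big)\,\dd\beta_r}_{=:\,M^\ep_{s,t}},
\]
estimating $D^\ep$ with the sewing Lemma \ref{thm:sewing1} and $M^\ep$ with the Burkholder--Davis--Gundy (BDG) inequality. The key structural point is that the coefficients of \eqref{eq:mckean-vlasov} depend on $x^\ep$ only through the law $\mu^\ep_r = \mathcal{L}(x^\ep_r)$, so the integrand $r \mapsto (b_\ep,a_\ep)(r, F(\mu^\ep_r) - w_r)$ is \emph{deterministic}; hence $D^\ep_{s,t}$ is a deterministic number and, by It\^o's isometry, $\EE\abs{M^\ep_{s,t}}^2 = \int_s^t \abs{a_\ep(r, F(\mu^\ep_r) - w_r)}^2\,\dd r$ is deterministic as well. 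Consequently the averaging/sewing estimates of Lemma \ref{thm:sewing1} apply pathwise in the fixed path $w$ and produce deterministic bounds, so no integrability of $\norm{L^w}_{C^\gamma_t H^\lambda_x}$ enters.

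The estimate is a bootstrap, because Lemma \ref{thm:sewing1} already presupposes a bound of the type $\sup_{s\neq t}\EE\abs{x^\ep_{s,t}}^p\abs{t-s}^{-p\gamma_1/2} < \infty$. The a priori input is free: for each fixed $\ep$ the mollified coefficients are bounded, so $\abs{D^\ep_{s,t}} \le \norm{b_\ep}_{L^\infty_{t,x}}\abs{t-s}$ and, by BDG, $\EE\abs{M^\ep_{s,t}}^p \lesssim_p \norm{a_\ep}_{L^\infty_{t,x}}^p\abs{t-s}^{p/2}$, whence $\Phi_\ep(e) := \sup_{s\neq t}\EE\abs{x^\ep_{s,t}}\abs{t-s}^{-e} < \infty$ for every $e \in (0,\tfrac12]$ (with an $\ep$-dependent constant). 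Now fix a target $\gamma_1$, set $e := \gamma_1/2$, and apply the quantitative sewing bound $\abs{(\cI A)_{s,t}} \le \abs{A_{s,t}} + c\norm{\delta A}_\beta\abs{t-s}^\beta$ of Lemma \ref{lem:sewing} to the germs $A_1^\ep, A_2^\ep$ of Lemma \ref{thm:sewing1} (with $a_\ep,b_\ep$ in place of $a,b$). I would estimate the \emph{frozen values} $A_i^\ep{}_{s,t}$ in $L^\infty_x = W^{0,\infty}_x$ rather than $W^{1,\infty}_x$: by Young's convolution inequality and Corollary \ref{thm:localnondet_cor} with $\lambda = 0$,
\[
\abs{A_1^\ep{}_{s,t}} \le \norm{b_\ep(s,\cdot)}_{L^2_x}\norm{L^w_{s,t}}_{L^2_x} \lesssim \norm{b}_{L^\infty_t L^2_x}\norm{L^w}_{C^{\gamma_{\mathrm{lead}}}_t L^2_x}\abs{t-s}^{\gamma_{\mathrm{lead}}}, \qquad \gamma_{\mathrm{lead}} < 1 - \tfrac{k}{2}\zeta_0,
\]
by Proposition \ref{thm:localnondet} with $\lambda = 0$, and likewise for $A_2^\ep$ using $\norm{\,\abs{a_\ep(s,\cdot)}^2}_{L^2_x} = \norm{a_\ep(s,\cdot)}_{L^4_x}^2 \le \norm{a(s,\cdot)}_{L^4_x}^2 \le \norm{\,\abs{a}^2}_{L^\infty_t L^2_x}$. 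The \emph{coboundaries} $\delta A_i^\ep$ I would estimate exactly as in \eqref{eq:A_1_coboundary}--\eqref{eq:Lipschitz_Ef3}, i.e.\ in $W^{1,\infty}_x$: using Proposition \ref{thm:localnondet} with $\lambda = 1$ (so $\gamma_{\mathrm{cob}} < 1 - (1 + \tfrac{k}{2})\zeta_0$), Hypothesis \ref{hyp:main}(i) on the $C^{\gamma_0}_t H^{-1}_x$-continuity of $b$ and $\abs{a}^2$, and \eqref{eq:1wasserstein_use} in the form $\abs{F(\mu^\ep_s) - F(\mu^\ep_\tau)} \le |F|_{\mathrm{Lip}}\EE\abs{x^\ep_{s,\tau}} \le |F|_{\mathrm{Lip}}\Phi_\ep(e)\abs{s-\tau}^e$, one gets $\norm{\delta A_i^\ep}_\beta \lesssim 1 + \Phi_\ep(e)$ with $\beta := \gamma_{\mathrm{cob}} + (\gamma_0 \wedge e) > 1$. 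Combining with $\EE\abs{M^\ep_{s,t}}^p \lesssim_p \big(\int_s^t\abs{a_\ep}^2\big)^{p/2}$ (BDG), this yields, for $p \ge 1$,
\[
\EE\abs{x^\ep_{s,t}}^p \;\lesssim_p\; \abs{t-s}^{p\gamma_{\mathrm{lead}}/2} \;+\; \big(1 + \Phi_\ep(e)\big)^{p}\,\abs{t-s}^{p\beta/2}.
\]

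Taking $p = 1$ and $e := \gamma_{\mathrm{lead}}/2$ (so the first exponent equals $e$, while $\beta/2 > \tfrac12 > e$), dividing by $\abs{t-s}^e$ and taking the supremum over $(s,t)$ in a subinterval $[t_0,t_0+\tau_0]\subseteq[0,T]$ gives an inequality $\phi_\ep \le C_1 + C_2\tau_0^{\kappa}\phi_\ep + C_3\tau_0^{\kappa'}\phi_\ep^{1/2}$ for the localised quantity $\phi_\ep$, with $\kappa,\kappa' > 0$ and $C_1,C_2,C_3$ independent of $\ep$ and $t_0$; since $\phi_\ep < \infty$ a priori, choosing $\tau_0$ small (uniformly in $\ep$) so that $C_2\tau_0^\kappa \le \tfrac14$ and absorbing the $\phi_\ep^{1/2}$ term by Young's inequality gives $\phi_\ep \le C_*$ \emph{uniformly in $\ep,t_0$}, and subadditivity of the increments along an $O(1/\tau_0)$-cell partition of $[0,T]$ upgrades this to $\sup_\ep\sup_{s\neq t}\EE\abs{x^\ep_{s,t}}\abs{t-s}^{-e} < \infty$. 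Reinserting this \emph{uniform} first-moment bound in place of $\Phi_\ep(e)$ removes the $\ep$-dependence and gives \eqref{eq:xep_tightness} with $\gamma_1 := \gamma_{\mathrm{lead}}$ for all $p \ge 1$ (and, by concavity of $x\mapsto x^p$, for $p\in[0,1)$); then $\EE\abs{x^\ep_t}^p \le 2^{p-1}(\abs{x_0}^p + \EE\abs{x^\ep_{0,t}}^p) \lesssim_p 1$, and the $C^{\gamma_1/2-}$ modification follows from the Kolmogorov--Chentsov criterion as $p\to\infty$. Finally, $\gamma_1 = \gamma_{\mathrm{lead}}$ can be chosen with $\tfrac{\gamma_1}{2} > (1+\tfrac{k}{2})\zeta_0$, i.e.\ \eqref{con.lem-gamme1}: given $(2+\tfrac{3k}{2})\zeta_0 < 1$ one takes $\gamma_{\mathrm{lead}} \in \big((2+k)\zeta_0,\,1-\tfrac{k}{2}\zeta_0\big)\neq\emptyset$, so $\tfrac{\gamma_{\mathrm{lead}}}{2} > (1+\tfrac{k}{2})\zeta_0$; since also $\gamma_0 > (1+\tfrac{k}{2})\zeta_0$, the interval $\big(1-(\gamma_0\wedge\tfrac{\gamma_{\mathrm{lead}}}{2}),\,1-(1+\tfrac{k}{2})\zeta_0\big)$ for $\gamma_{\mathrm{cob}}$ is nonempty, which is exactly what the two inequalities of Hypothesis \ref{hyp:main}(iii) provide.

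I expect the main obstacle to be precisely this circularity: Lemma \ref{thm:sewing1} is only available \emph{after} one knows a H\"older-in-time moment bound for $x^\ep$, so the self-improvement must be arranged carefully — the a priori ($\ep$-dependent) finiteness, the time-localisation, and the quadratic-type absorption are all needed to extract a constant uniform in $\ep$, and the Sobolev indices must be split ($\lambda=0$ in the leading term, $\lambda=1$ in the coboundary) so that the attainable $\gamma_1$ still satisfies \eqref{con.lem-gamme1} under Hypothesis \ref{hyp:main}(iii). A secondary delicate point is the germ $A_2^\ep$: since $\abs{a_\ep}^2 = \abs{J_\ep\ast a}^2$ is not the mollification of $\abs{a}^2$, one must check that the $L^\infty_t L^2_x \cap C^{\gamma_0}_t H^{-1}_x$-regularity of $\abs{a}^2$ assumed in Hypothesis \ref{hyp:main}(i) transfers to $\abs{a_\ep}^2$ with $\ep$-uniform bounds — this is where that assumption, rather than a weaker one, is genuinely used.
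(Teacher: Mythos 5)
Your proposal is correct and follows essentially the same route as the paper: BDG to reduce to the drift and quadratic-variation integrals, sewing of the frozen germs with the local time estimated in $L^2_x$ for the germ itself ($\gamma<1-\tfrac{k}{2}\zeta_0$) and in $H^1_x$ for the coboundary ($\gamma<1-(1+\tfrac{k}{2})\zeta_0$), an $\ep$-dependent a priori bound from $|b_\ep|\lesssim\ep^{-1}$, $|a_\ep|^2\lesssim\ep^{-2}$, and a self-improvement that removes the $\ep$-dependence. Your time-localised absorption at $p=1$ followed by patching is a slightly more careful implementation of the paper's closing step (which sends $|s_n-t_n|\to 0$ to absorb the $c_{p,\ep,\gamma_1}|t-s|^{p\gamma}$ term), and your remarks on the determinism of the integrands and on $|J_\ep\ast a|^2$ versus $J_\ep\ast|a|^2$ are pertinent points the paper leaves implicit.
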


\begin{proof}
By the Burkholder--Davis--Gundy inequality,
\begin{align}\label{eq:xep_cont}
\EE \abs{x^\ep_{s,t}}^p
	& \lesssim_p 
     \abs{I_1}^p +  \abs{I_2}^{p/2}, 
\end{align}
where
\begin{equation}\label{eq:I_defin1}
\begin{aligned}
I_1 &:=  \int_s^t b_\ep(r,  F(\mu^\ep_r) - w_r) \dd r,\quad
I_2 := \int_s^t  \sum_{i,j} 
    a_{\ep,ij}^2(r, F(\mu^\ep_r) - w_r) \dd r.
\end{aligned}
\end{equation}
Since $|b_\ep| \lesssim \ep^{-1}$ and $|a_\ep^2|\lesssim \ep^{-2}$, 
the quantity
$$
c_{p,\ep,\gamma_1} := \sup_{s \not= t}
    \EE \frac{\abs{x^\ep_{s,t}}^p}{|s - t|^{p \gamma_1/2}} 
$$
is therefore bounded {\em a priori} for fixed $\ep> 0$ and $\gamma_1 < 1$, 
and it remains to prove the uniformity of the bound in $\ep$.

Following \cite[Lemma 4.2]{bechtold}, we interpret the integrals 
using the sewing lemma by considering the germs: $(s,t)\in\Delta_2$,
\begin{align*}
&(A^\ep_1)_{s,t} := \int_s^t b_\ep(s, F(\mu^\ep_s) - w_r) \dd r,\\
&(A^\ep_2)_{s,t} := \int_s^t  \sum_{i,j} 
    a_{\ep,ij}^2(s, F(\mu^\ep_s) - w_r) \dd r.
\end{align*}

Let us first focus on $A^\ep_1$.
Using the smoothing operator $T^{w}$ and 
the local time $L^{w}$ of $w$, following 
\eqref{eq:localtime_repr}, we can write $A^\ep_1$ as 
\begin{align*}
(A^\ep_1)_{s,t} = \big(T^{w}_{s,t} b(s)
\big)(F(\mu_s)) 
= \big(b(s) * L^{w}_{s,t}\big)(F(\mu_s)).
\end{align*}
By Proposition \ref{thm:localnondet} the local time 
satisfies the bound $\|L_{s,t}^w\|_{L^2_x} 
    \lesssim \abs{t - s}^{\gamma}$ 
for every $\gamma < 1 - k \zeta_0/2$, therefore 
by Young's convolution inequality, uniformly in $\ep$, 
\begin{align*}
\big|(A^\ep_1)_{s,t}\big|
&\leq \big|b_\ep(s
) * L^{w}_{s,t}(F(\mu_s^\ep))\big|
\lesssim \|b(s)\|_{L^2_x} 
	\|L^{w}\|_{C^\gamma_t L^2_x} |t - s|^{\gamma}.
\end{align*}
Similarly, 
\begin{align*}
\big|(A^\ep_2)_{s,t}\big|
&\leq \big|a^2_\ep(s
) * L^{w}_{s,t}(F(\mu_s^\ep))\big|
\lesssim \|a^2(s)\|_{L^2_x} 
	\|L^{w}\|_{C^\gamma_t L^2_x} |t - s|^{\gamma}.
\end{align*}
Notice that this choice of $\gamma$ is compatible with 
$\gamma_1/2 > (1 + k/2) \zeta_0$ as long as
$$
2\Big(1 + \frac{k}2\Big) \zeta_0 
< 1 - \frac{k}2 \zeta_0 \quad i.e. \quad \Big(2 + \frac{3k}2\Big) \zeta_0 < 1, 
$$
which is exactly the first condition in Hypothesis \ref{hyp:main} (iii).
As in \eqref{eq:A_1_coboundary}, uniformly in $\epsilon$
\begin{align*}
\big|(\delta A^\ep_1)_{s,u,t}\big|
& \le \|b(s, \cdot) - b(\tau, \cdot)\|_{H^{-1}_x}\|L^w_{\tau,t}\|_{H^1_x} \\
&\quad\,\, + \|b(\tau)*L^w_{\tau,t}\|_{W^{1,\infty}_x} 
    \big|F(\mu_s^\ep) - F(\mu_\tau^\ep)\big|\\
&\le 
|\tau - s|^{\gamma_0} \|L^w\|_{C^\gamma_t L^2_x}|t - \tau|^{\gamma}\\
&\quad\,\, 
+ \|b(\tau)\|_{L^2_x} \|L^w\|_{C^\gamma_t H^1_x}
|F|_{\rm Lip} \frac{ \EE |x_{s,\tau}^\ep|}{|s - \tau|^{ \gamma_1/2}}
|t - \tau|^\gamma|\tau - s|^{ \gamma_1/2}\\
& \lesssim |t - s|^{\gamma + \gamma_0} + c_{1,\ep,\gamma_1} |t - s|^{\gamma + \gamma_1/2},
\end{align*}
with $\gamma \in (1 - (\gamma_1/2 \wedge \gamma_0), 1 - (1 - k/2)\zeta_0)$,
which ensures $\gamma_1/2 + \gamma, \gamma_0 + \gamma > 1$.

Therefore $A^\ep_1$ admits a sewing $\mathcal{I}A^\ep_1$ and uniformly on $\epsilon$
\begin{align}\label{eq:IG1_cont}
 \big|\mathcal{I}A^\ep_1\big|^p \lesssim \big|(A^\ep_1)_{s,t}\big|^p + 
    |t - s|^{p(\gamma + \gamma_0)} 
    + c_{p,\ep,\gamma_1} |t - s|^{p(\gamma + \gamma_1/2)}.
\end{align}
Likewise, $A^\ep_2$ admits a sewing $\mathcal{I} A^\ep_2$
\begin{align}\label{eq:IG2_cont}
 \big|\mathcal{I}A^\ep_2\big|^p \lesssim 
 \big|(A^\ep_2)_{s,t}\big|^p + 
     |t - s|^{p(\gamma + \gamma_0)} 
     + c_{p,\ep,\gamma_1} |t - s|^{p(\gamma + \gamma_1/2)}.
\end{align}

We interpret the integrals $I_1$ and $I_2$ in 
\eqref{eq:I_defin1}  respectively by the sewings $\mathcal{I}A^\ep_1$ and $\mathcal{I}A^\ep_2$ using Lemma \ref{thm:sewing1}. 
Inserting \eqref{eq:IG1_cont} and \eqref{eq:IG2_cont} 
into \eqref{eq:xep_cont}, we find uniformly on $\epsilon$
\begin{align*}
\EE \abs{x^\ep_{s,t}}^p &\lesssim_p |t - s|^{p\gamma_1/2} + 
 |t - s|^{p(\gamma + \gamma_0)/2} \\
    &\quad\,\, 
    + c_{p,\ep,\gamma_1} |t - s|^{p(\gamma + \gamma_1/2)}
    + c_{p,\ep,\gamma_1}^{1/2} |t - s|^{p(\gamma + \gamma_1/2)/2}\\
&\lesssim_p |t - s|^{p\gamma_1/2} +  c_{p,\ep,\gamma_1} |t - s|^{p(\gamma + \gamma_1/2)}\\
&\quad\,\,
    + c_{p,\ep,\gamma_1}^{1/2} |t - s|^{p\gamma_1/2} T^{p(\gamma - \gamma_1/2)/2},
\end{align*}
where we used $\gamma > 1 - \gamma_1/2$, which implies 
$\gamma - \gamma_1/2 > 1 - \gamma_1 > 0$. 
This allows us to deduce 
$$
\EE \frac{\big|x^\ep_{s,t}\big|^p}{|t - s|^{p\gamma_1/2}}
\lesssim_{p,T} 
1 + c_{p,\ep,\gamma_1} |t - s|^{p\gamma} 
+ c_{p,\ep,\gamma_1}^{1/2}.
$$
By taking a sequence $(s,t) = (s_n,t_n)$ where $|s_n - t_n| \to 0$ 
to approximate $c_{p,\ep,\gamma_1}$ on the left hand side, we 
arrive at the bound $ c_{p,\ep,\gamma_1} \lesssim 1 
+ c_{p,\ep,\gamma_1}^{1/2}$. This shows that $c_{p,\ep,\gamma_1}$ 
is $\ep$-independent.
Then taking $p$ to be arbitrarily big, by Kolmogorov's continuity 
criterion, we conclude that there is a version of $x^\ep$ 
with a.s.~$C^{\gamma_1/2}$ paths, and satisfies the bound 
\eqref{eq:xep_tightness} as sought. 

\end{proof}

We immediately attain by the Skorokhod representation 
theorem the following limiting result on a new probability space:
\begin{prop}[Skorokhod representation theorem]\label{thm:skorokhod}
Let $\mathcal{X} := \RR \times C^{\gamma_1/4}([0,T]) \times C([0,T])$, 
with $\gamma_1 \in (0,1)$, satisfying
$$
\frac{\gamma_1}2 > \Big(1 + \frac{k}2\big) \zeta_0.
$$
There exists a probability space 
$\tilde{E}:= (\tilde{\Omega}, \tilde{\mathcal{F}}, 
    \tilde{\mathbb{P}})$ and $\mathcal{X}$-valued random variables 
$\{\tilde{X}_k :=(\tilde{x}_0^k, \tilde{x}^k, \tilde{\beta}_k)\}_{k = 1}^\infty$
and $\tilde{X} := (\tilde{x}_0, \tilde{x}, \tilde{\beta})$ such that 
along a subsequence $\ep_k \downarrow 0$, 
$$
(x^{\ep_k}_0,  x^{\ep_k}, \beta) \sim \tilde{X}_k, \qquad
\tilde{X}_k \xrightarrow{k\uparrow \infty} \tilde X \,\, \text{ in } \mathcal{X}.
$$
\end{prop}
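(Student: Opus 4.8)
The plan is to derive the statement from Prokhorov's theorem together with the Skorokhod representation theorem, which reduces the task to showing that the family of laws $\{\mathrm{Law}(x_0^{\ep},x^{\ep},\beta)\}_{\ep>0}$ is tight on the product space $\mathcal{X}$. Since tightness on a product space follows from tightness of the marginals, two of the three components are disposed of at once: $x_0^{\ep}=x_0$ is deterministic, so its law is the single Dirac mass $\delta_{x_0}$; and the law of $\beta$ is independent of $\ep$, being Wiener measure on the Polish space $C([0,T])$, hence a single Radon measure. The only substantive point is therefore tightness of $\{x^{\ep}\}_{\ep>0}$ in $C^{\gamma_1/4}([0,T])$.

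For this I would promote the pointwise moment estimates of Lemma~\ref{thm:Cgamma_0_tightness} to a bound on a H\"older norm. The first estimate in \eqref{eq:xep_tightness} can be rewritten as $\EE|x^{\ep}_{s,t}|^p\lesssim_p|t-s|^{1+(p\gamma_1/2-1)}$ uniformly in $\ep$, while $\EE|x_0^{\ep}|^p\lesssim_p1$; hence the quantitative Kolmogorov--Chentsov continuity theorem --- equivalently, a Garsia--Rodemich--Rumsey or Besov-embedding argument --- gives, for every $p>4/\gamma_1$ and every $\alpha<\gamma_1/2-1/p$,
\[
\sup_{\ep>0}\EE\,\norm{x^{\ep}}_{C^{\alpha}_t}^{p}\lesssim_{p,T}1.
\]
Fixing $p$ large and $\alpha\in(\gamma_1/4,\gamma_1/2)$, Markov's inequality yields $\PP\big(\norm{x^{\ep}}_{C^{\alpha}_t}>R\big)\lesssim R^{-p}$ uniformly in $\ep$, and since bounded sets of $C^{\alpha}([0,T])$ are relatively compact in $C^{\gamma_1/4}([0,T])$ (the compact embedding of H\"older spaces over a bounded interval, via Arzel\`a--Ascoli and interpolation), the family $\{x^{\ep}\}$ is tight in $C^{\gamma_1/4}([0,T])$, and hence $\{(x_0^{\ep},x^{\ep},\beta)\}$ is tight in $\mathcal{X}$. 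This is precisely where the room between the H\"older exponent $\gamma_1/2$ delivered by Lemma~\ref{thm:Cgamma_0_tightness} and the weaker exponent $\gamma_1/4$ built into $\mathcal{X}$ is consumed.

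With tightness in hand, Prokhorov's theorem produces a subsequence $\ep_k\downarrow0$ along which $\mathrm{Law}(x_0^{\ep_k},x^{\ep_k},\beta)$ converges weakly on $\mathcal{X}$, and the Skorokhod representation theorem then furnishes $(\tilde{\Omega},\tilde{\mathcal{F}},\tilde{\mathbb{P}})$ together with random variables $\tilde{X}_k=(\tilde{x}_0^k,\tilde{x}^k,\tilde{\beta}_k)$ satisfying $(x_0^{\ep_k},x^{\ep_k},\beta)\sim\tilde{X}_k$ and $\tilde{X}_k\to\tilde{X}$ $\tilde{\mathbb{P}}$-a.s.\ in $\mathcal{X}$, as asserted. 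One mild technical point deserves care: $C^{\gamma_1/4}([0,T])$ is not separable, so neither Prokhorov's nor the classical Skorokhod theorem applies verbatim. This is handled in the standard way, either by invoking Jakubowski's generalisation of the Skorokhod representation theorem, or --- since from the previous step the $x^{\ep_k}$ in fact take values in bounded subsets of $C^{\alpha}([0,T])\subset c^{\gamma_1/4}([0,T])$ with $\alpha>\gamma_1/4$ --- by working throughout with the separable little-H\"older space $c^{\gamma_1/4}([0,T])$ in place of $C^{\gamma_1/4}([0,T])$, which makes $\mathcal{X}$ Polish while leaving the conclusion unchanged since the $c^{\gamma_1/4}$-norm is the restriction of the $C^{\gamma_1/4}$-norm. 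I expect the H\"older-norm tightness estimate, together with this separability bookkeeping, to be the only genuine work; the Prokhorov and Skorokhod steps are then entirely routine.
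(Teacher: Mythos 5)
Your argument is correct and is exactly the route the paper intends: the paper gives no written proof of Proposition~\ref{thm:skorokhod}, simply asserting that it follows ``immediately'' from Lemma~\ref{thm:Cgamma_0_tightness}, and your proposal supplies precisely the standard missing steps (Kolmogorov--Chentsov upgrade of the uniform moment bounds to H\"older-norm moments, compact embedding $C^{\alpha}\hookrightarrow C^{\gamma_1/4}$ for $\alpha>\gamma_1/4$, Prokhorov, Skorokhod). Your remark on the non-separability of $C^{\gamma_1/4}([0,T])$ and the fix via little-H\"older spaces or Jakubowski's theorem is a genuine point of care that the paper glosses over, and is handled correctly.
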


Let $\mathcal{N}$ denote the collection of $\tilde{\mathbb{P}}$-null sets. 
We conclude this section by equipping the probability 
space $\tilde{E}$ established in Proposition \ref{thm:skorokhod} 
with a sequence of filtrations
\begin{align}\label{eq:filtrations}
\tilde{\mathcal{F}}^k_t := \Sigma(\{ \tilde{X}_k(t): s \in [0,t]\} \cup \mathcal{N}),
\end{align}
where $\Sigma(R)$ denotes the $\sigma$-algebra generated by $R$. 

Since we are concerned about the convergence of the distribution of 
$x^{\ep_k}(t)$ for any fixed $t$, we derive the convergence of the 
laws at fixed $t \in [0,T]$ from the a.s.~convergence given in Proposition 
\ref{thm:skorokhod} above. This will be central for our construction of a limiting solution in the subsequent section. 

\begin{lem}\label{thm:law_convergence}
Let $\gamma_1 > 0$ and let $\{y_k\} \subset_b L^p(\Omega;C^{\gamma_1}([0,T]))$, $p > 1$, be a sequence that 
that tends to $y$ a.s.~in $C_t$. For any $t \in [0,T]$, let 
$\mu^k_t$ be the law of $y_k(t)$ (on $\RR^n$) and $\mu_t$ be the law of $y(t)$ on $\RR^n$. 
\begin{itemize}
\item[(i)] For any fixed $k$, 
$\WW_1(\mu^k_t,\mu^k_s) \le |t - s|^{\gamma_1}$.
\item[(ii)] 
    Uniformly in $t \in [0,T]$, 
    $\WW_1(\mu^k_t,\mu_t) \to 0$ as $k \uparrow \infty$.
\end{itemize}
\end{lem}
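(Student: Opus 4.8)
The plan is to prove both parts directly from the Kantorovich–Rubinstein dual characterisation of $\WW_1$ together with the two hypotheses: $\{y_k\}$ is bounded in $L^p_\omega C^{\gamma_1}_t$ and $y_k \to y$ almost surely in $C_t$. For part (i), fix $k$ and a $1$-Lipschitz test function $\varphi$ on $\RR^n$. Then $\int \varphi\,(\mu^k_t - \mu^k_s) = \EE[\varphi(y_k(t)) - \varphi(y_k(s))] \le \EE|y_k(t) - y_k(s)| \le \EE \|y_k\|_{C^{\gamma_1}_t} |t-s|^{\gamma_1}$. If the implicit constant in $\{y_k\}\subset_b L^p_\omega C^{\gamma_1}_t$ is normalised to one (as the statement $\WW_1(\mu^k_t,\mu^k_s)\le |t-s|^{\gamma_1}$ suggests), taking the supremum over $\varphi$ gives the claim; otherwise one picks up a uniform constant, which is harmless for the application. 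This is routine.

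For part (ii), the first step is to establish pointwise (in $t$) convergence $\WW_1(\mu^k_t,\mu_t)\to 0$. Almost-sure convergence $y_k\to y$ in $C_t$ implies $y_k(t)\to y(t)$ a.s.\ in $\RR^n$ for each fixed $t$, hence in law, hence $\WW_1(\mu^k_t,\mu_t)\to 0$ provided we also have convergence of first moments (or uniform integrability of $\{|y_k(t)|\}_k$). The latter follows from the uniform $L^p_\omega$ bound with $p>1$: $\sup_k \EE|y_k(t)|^p \le \sup_k \EE\|y_k\|_{C_t}^p < \infty$, which gives uniform integrability of $\{y_k(t)\}$ and thus $\WW_1$-convergence (this is the standard equivalence: $\WW_1$-convergence $\iff$ weak convergence plus convergence of first moments). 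The second step is to upgrade pointwise convergence to uniform convergence in $t\in[0,T]$. Here I would use equicontinuity: by part (i), $t\mapsto \mu^k_t$ is $\gamma_1$-Hölder into $(\cP_1(\RR^n),\WW_1)$ uniformly in $k$, and the same bound passed to the limit gives that $t\mapsto\mu_t$ is $\gamma_1$-Hölder as well. Then the function $g_k(t):=\WW_1(\mu^k_t,\mu_t)$ satisfies, via the triangle inequality, $|g_k(t)-g_k(s)| \le \WW_1(\mu^k_t,\mu^k_s) + \WW_1(\mu_t,\mu_s) \lesssim |t-s|^{\gamma_1}$, so $\{g_k\}$ is uniformly equicontinuous on the compact interval $[0,T]$; combined with pointwise convergence $g_k\to 0$, a standard Arzelà–Ascoli / Dini-type argument yields uniform convergence $\sup_{t\in[0,T]} g_k(t)\to 0$.

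The main obstacle I expect is the passage from weak convergence to $\WW_1$-convergence at fixed $t$, i.e.\ making sure the first moments do not escape to infinity; this is exactly where the hypothesis $p>1$ (rather than merely $p=1$) is used, to get genuine uniform integrability rather than mere boundedness of first moments. Everything else — part (i), the equicontinuity of $t\mapsto\mu^k_t$, and the equicontinuity-plus-pointwise-convergence argument for uniformity in $t$ — is soft and follows the pattern above. One should also note that the indexing in the definition of $\tilde{\mathcal F}^k_t$ in \eqref{eq:filtrations} and the precise constant in $\subset_b$ are not needed here; only the qualitative a.s.\ convergence and the uniform Hölder/$L^p_\omega$ bound enter.
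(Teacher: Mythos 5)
Your part (i) coincides with the paper's argument: Kantorovich duality plus the uniform H\"older bound on $\EE|y_k(t)-y_k(s)|$ (and, as you note, the honest conclusion is $\lesssim|t-s|^{\gamma_1}$ with the implicit $L^p_\omega C^{\gamma_1}_t$ constant). For part (ii) you take a genuinely different, somewhat longer route. The paper exploits that $y_k$ and $y$ live on the same probability space: duality (equivalently the synchronous coupling) gives $\WW_1(\mu^k_t,\mu_t)\le \EE|y_k(t)-y(t)|\le \EE\|y_k-y\|_{C_t}$, and since $\|y_k-y\|_{C_t}\to 0$ a.s.\ while $\{\|y_k\|_{C_t}\}_k$ is bounded in $L^p_\omega$ with $p>1$ (hence $\{\|y_k-y\|_{C_t}\}_k$ is uniformly integrable), Vitali yields $\EE\|y_k-y\|_{C_t}\to 0$; uniformity in $t$ is automatic because the supremum over $t$ sits inside the expectation. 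You instead prove pointwise-in-$t$ convergence via the characterisation ``weak convergence plus convergence of first moments'' and then upgrade to uniformity using the equicontinuity of $t\mapsto\mu^k_t$ from part (i) together with a Dini/Arzel\`a--Ascoli argument (correctly noting that the limit curve $t\mapsto\mu_t$ inherits the H\"older bound). Both arguments are correct and use the hypothesis $p>1$ in the same essential place (uniform integrability); yours requires the extra equicontinuity step but is more robust, since it would survive without a common probability space, whereas the paper's is a one-line coupling bound followed by Vitali.
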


\begin{proof}
The first statement follows directly from Kantorich duality for the $1$-Wasserstein 
norm. For any $s, t \in [0,T]$, we have
\begin{align*}
\WW_1(\mu^k_t, \mu^k_s) 
&= \sup_{|g|_{\rm Lip} \le 1} \int_{\RR^n}g(z) \big(\mu^k_t - \mu^k_s\big)(\dd z)\\
&= \sup_{|g|_{\rm Lip} \le 1} \EE \big[g(y^k_t) - g(y^k_s)\big]\\
& \le \sup_{|g|_{\rm Lip} \le 1} |g|_{\rm Lip}\, \EE |y^k_t - y^k_s| 
\lesssim  |t - s|^{\gamma_1}.
\end{align*}

The second statement follows from a similar calculation, 
whereby
\begin{align*}
\WW_1(\mu^k_t, \mu_t) \le \sup_{|g|_{\rm Lip} \le 1} 
    |g|_{\rm Lip} \,\EE \big|y^k_t - y_t\big|.
\end{align*}
We first show that $y_t \in L^{p - \epsilon}(\Omega; C_t)$ for some 
$p - \epsilon > 1$. Using the assumed a.s.~convergence, 
$ \big|y^k_t - y_t\big| \to 0$ as $k \uparrow \infty$, uniformly in $t$. 
Then using the boundedness of  $\{y_k\} \subset_b L^p(\Omega;C^{\gamma_1}_t)$,
Vitali's convergence theorem implies the convergence of 
$\EE \Big|\big|y^k_t \big| 
    - | y_t|\Big|^{p - \epsilon}$ for any $p - \epsilon > 1$, 
and hence $\EE |y_t|^{p - \epsilon} < \infty$ by the triangle inequality. 
Jensen's inequality now implies the 
convergence $\EE \big|y^k_t - y_t\big|^{p'}$ for any 
$1 \le p' < p - \epsilon$. This proves (ii).

\end{proof}

\section{Identification of the limit}
\label{sec:lim}
Recall the representatives $\tilde{X}^k$ defined in 
Proposition \ref{thm:skorokhod}, and their laws $\mu^k_s$.
For any fixed $k$, by the equality of laws, 
\begin{align*}
\tilde{x}^k_t = \tilde{x}^k_0 
    + \int_0^t b_{\ep_k}(s, F(\mu^k_s) - w_s) \dd t
    + \int_0^t a_{\ep_k}(s, F(\mu^k_s) - w_s) \dd \tilde{\beta}^k_s.
\end{align*}

We now consider the limit as $k \uparrow \infty$, 
which can be taken using the convergence asserted 
in Proposition \ref{thm:skorokhod} and a standard
martingale identification argument.

Consider the processes
\begin{equation}\label{eq:martingale_approx}
\begin{aligned}
&\tilde{M}^k(t) := \tilde{x}^k_t - \tilde{x}^k_0 
    - \int_0^t b_{\ep_k}(s,  F(\mu^k_s) - w_s)\dd s,\\
&\tilde{R}^k(t):= \big|\tilde{M}^k(t)\big|^2 - \int_0^t a^2_{\ep_k}(s, 
          F(\mu^k_s) - w_s)\dd s, \,\textrm{and}\\
&\tilde{N}^k(t) := \tilde{M}^k(t) \tilde{\beta}^k_t -  \int_0^t a_{\ep_k}(s, 
          F(\mu^k_s) - w_s)\dd s.
\end{aligned}
\end{equation}
We interpret the time integrals as sewings:
\begin{align*}
\int_0^t b_{\ep_k}(s, F(\mu^k_s) - w_s)\dd s, \qquad
\int_0^t a^2_{\ep_k}(s, F(\mu^k_s) - w_s)\dd s
\end{align*}
as in \eqref{eq:IG1_cont} and \eqref{eq:IG2_cont}.
Similarly, we can interpret the remaining integral 
as a sewing:
\begin{align*}
 \int_0^t a_{\ep_k}(s, F(\mu^k_s) - w_s)\dd s
    & = \mathcal{I} \int_s^{s'} a_{\ep_k}(s, F(\mu^k_s) - w_r)\dd r.
\end{align*}

We have the following martingale property for the processes in \eqref{eq:martingale_approx}.
\begin{lem}
For each fixed $k$, the processes $\tilde{M}^k$, $\tilde{N}^k$,
and $\tilde{R}^k$ defined in \eqref{eq:martingale_approx} are 
martingales relative to the filtration $\{\tilde{\mathcal{F}}^k_t\}_{t \in [0,T]}$ 
constructed in \eqref{eq:filtrations}.
\end{lem}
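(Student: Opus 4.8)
The plan is to show the martingale property by transferring it from the approximating solutions $x^{\ep_k}$ on the original probability space, where it holds by construction, to the Skorokhod representatives $\tilde{X}^k$, using the equality of laws asserted in Proposition \ref{thm:skorokhod}. Concretely, on the original space $x^{\ep_k}$ solves the SDE \eqref{eq:mckean-vlasov} with a genuine Brownian motion $\beta$, so the process $M^{\ep_k}(t) := x^{\ep_k}_t - x^{\ep_k}_0 - \int_0^t b_{\ep_k}(s, F(\mu^k_s) - w_s)\,\dd s$ equals the stochastic integral $\int_0^t a_{\ep_k}(s, F(\mu^k_s) - w_s)\,\dd\beta_s$ and is therefore a square-integrable $\{\cF_t\}$-martingale; moreover $|M^{\ep_k}(t)|^2 - \int_0^t a^2_{\ep_k}(s,F(\mu^k_s) - w_s)\,\dd s$ and $M^{\ep_k}(t)\beta_t - \int_0^t a_{\ep_k}(s, F(\mu^k_s)-w_s)\,\dd s$ are $\{\cF_t\}$-martingales as well, by It\^o's formula / the definition of quadratic (co)variation. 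Note that the time integrals here involve only the deterministic function $F(\mu^k_s)$ (the law being non-random) and the deterministic path $w$, so they are themselves deterministic and, crucially, are identical measurable functionals of $(x^{\ep_k}_0, x^{\ep_k}, \beta)$ and of $(\tilde x_0^k, \tilde x^k, \tilde\beta^k)$ — in fact the sewing interpretation makes them continuous functionals of the paths.

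The key step is then a standard ``transfer of martingale property via equality in law'' lemma: if two $\mathcal X$-valued random variables have the same law and one induces a martingale (with respect to its own generated filtration) through a fixed measurable functional, so does the other. The mechanism is to test the martingale property against bounded continuous functionals of the path up to time $s$: for $M^{\ep_k}$ one has, for every $0 \le s \le t \le T$ and every bounded continuous $\Phi: \mathcal X|_{[0,s]} \to \RR$ measurable with respect to $\cF^{\ep_k}_s$,
\begin{align*}
\EE\big[\big(M^{\ep_k}(t) - M^{\ep_k}(s)\big)\, \Phi(x^{\ep_k}_0, x^{\ep_k}|_{[0,s]}, \beta|_{[0,s]})\big] = 0,
\end{align*}
and since $M^{\ep_k}(t) - M^{\ep_k}(s)$ is itself a fixed measurable (indeed, by the sewing estimates, pathwise continuous) functional of the triple, the expectation depends on the triple only through its law. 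By Proposition \ref{thm:skorokhod} the law is unchanged under passing to $\tilde X_k$, so the same identity holds with tildes, which (as $\Phi$ ranges over a measure-determining class) is exactly the $\{\tilde\cF^k_t\}$-martingale property of $\tilde M^k$. The same argument applied to the two quadratic functionals gives the martingale property of $\tilde R^k$ and $\tilde N^k$.

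The main obstacle — and the point requiring care rather than the bookkeeping above — is the measurability/continuity of the functional $x \mapsto \int_0^t b_{\ep_k}(s, F(\mu^k_s) - w_s)\,\dd s$ and its analogues, which are defined via the sewing operator $\cI$ of Lemma \ref{lem:sewing} rather than as naive Lebesgue integrals of possibly-singular integrands. One must check that, for fixed $\ep_k$, these sewings are genuine measurable functions of the path (here it helps that $b_{\ep_k}, a_{\ep_k}$ are smooth, so the germs $(A^{\ep_k}_i)_{s,t}$ are continuous in the path and the sewing is an honest limit of Riemann-type sums, hence Borel-measurable); and that the stochastic integral $\int_0^t a_{\ep_k}\,\dd\beta$ admits a version that is a continuous functional of $(x^{\ep_k}, \beta)$, so that it is meaningful to speak of its law being preserved. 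With $a_{\ep_k}$ bounded and Lipschitz this is standard. A secondary subtlety is that the filtrations $\{\tilde\cF^k_t\}$ in \eqref{eq:filtrations} are the augmented natural filtrations of the representatives, so one should note that $\tilde\beta^k$ remains a Brownian motion with respect to $\{\tilde\cF^k_t\}$ (it has the right law and independent increments relative to the past, both being law-invariant properties), which is what legitimises calling $\tilde N^k$ a martingale; alternatively one deduces this a posteriori from the martingale identities themselves.
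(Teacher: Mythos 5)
Your proposal is correct and follows essentially the same route as the paper: the paper likewise characterises the martingale property by testing against bounded continuous functionals $\phi$ of the paths up to time $s$ (the identities \eqref{eq:martingale_n}) and transfers these identities from the original space to the Skorokhod representatives via the equality of laws in Proposition \ref{thm:skorokhod}, citing the standard argument of \cite{Brzezniak01092011}. Your additional remarks on the measurability of the sewing functionals and on $\tilde\beta^k$ remaining a Brownian motion for the augmented filtration are sensible elaborations of details the paper leaves implicit.
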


This is a standard argument that goes back to \cite{Brzezniak01092011}. 
Let $\phi$ be a bounded continuous functional on $C_t\times C_t$. 
Let $\tilde{x}^k$ and $\tilde{\beta}_k$ be as defined in Proposition 
\ref{thm:skorokhod}. The processes defined in \eqref{eq:martingale_approx} 
are martingales if and only if, for every  $0 \le s \le t \le T$, 
\begin{equation}\label{eq:martingale_n}
\begin{aligned}
&\tilde{\EE} \big[(\phi(\tilde{x}^k|_{[0,s]}, \tilde{\beta}_k|_{[0,s]}) 
    (\tilde{M}^k(t) - \tilde{M}^k(s))\big] = 0,\\
&\tilde{\EE} \Big[\phi(\tilde{x}^k|_{[0,s]}, \tilde{\beta}_k|_{[0,s]}) 
 (\tilde{R}^k(t) - \tilde{R}^k(s))\Big] = 0,\\
&\tilde{\EE} \Big[\phi(\tilde{x}^k|_{[0,s]}, \tilde{\beta}_k|_{[0,s]})
    (\tilde{N}^k(t) - \tilde{N}^k(s))\Big] = 0.
\end{aligned}
\end{equation}
This in turn is a consequence of the equivalence of laws given by 
Proposition \ref{thm:skorokhod}. We now take limits in each of the 
equations of \eqref{eq:martingale_n} to get 

\begin{lem}\label{lem:limit}
Define the processes:
\begin{align*}
&\tilde{M}(t) := \tilde{x}_t - \tilde{x}_0 
    - \int_0^t b(s, F(\mu_s) - w_s)\dd s,\\
&\tilde{R}(t):= \big|\tilde{M}(t)\big|^2 - \int_0^t a^2(s, 
         F(\mu_s) - w_s)\dd s, \,\textrm{and}\\
&\tilde{N}(t) := \tilde{M}(t) \tilde{\beta}_t -  \int_0^t a(s, 
         F(\mu_s) - w_s)\dd s.
\end{align*}
Then
\begin{equation}\label{eq:martingale_limit}
\begin{aligned}
&\tilde{\EE} \big[(\phi(\tilde{x}|_{[0,s]}, \tilde{\beta}|_{[0,s]}) 
    (\tilde{M}(t) - \tilde{M}(s))\big] = 0,\\
&\tilde{\EE} \Big[\phi(\tilde{x}|_{[0,s]}, \tilde{\beta}|_{[0,s]})
  (\tilde{R}(t) - \tilde{R}(s))\Big] = 0,\\
&\tilde{\EE} \Big[\phi(\tilde{x}|_{[0,s]}, \tilde{\beta}|_{[0,s]}) 
    (\tilde{N}(t) - \tilde{N}(s) )\Big] = 0.
\end{aligned}
\end{equation}

\end{lem}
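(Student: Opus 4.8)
The plan is to pass to the limit $k \uparrow \infty$ in each of the three identities of \eqref{eq:martingale_n}, using the almost-sure convergence $\tilde X_k \to \tilde X$ in $\mathcal{X}$ from Proposition \ref{thm:skorokhod} together with uniform integrability. First I would record the consequences of that convergence: $\tilde x^k \to \tilde x$ and $\tilde\beta^k \to \tilde\beta$ uniformly on $[0,T]$ almost surely, and $\tilde x^k_0 \to \tilde x_0$. Since $\phi$ is bounded and continuous on $C_t \times C_t$, the prefactors $\phi(\tilde x^k|_{[0,s]}, \tilde\beta^k|_{[0,s]})$ converge a.s. and are uniformly bounded. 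The nontrivial convergences are of the three integral terms appearing in $\tilde M^k$, $\tilde R^k$, $\tilde N^k$; once those are shown to converge (say in $L^1_\omega$, or a.s. with uniform integrability), the products converge in $L^1(\tilde\Omega)$ and we may pass the limit through $\tilde\EE$ to obtain \eqref{eq:martingale_limit}.

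The core step is therefore the convergence of the drift-type integrals
$\int_0^t b_{\ep_k}(s, F(\mu^k_s) - w_s)\dd s \to \int_0^t b(s, F(\mu_s) - w_s)\dd s$
and the analogous statements for $a^2_{\ep_k}$ and $a_{\ep_k}$. I would interpret all of these as sewings, exactly as in \eqref{eq:IG1_cont}--\eqref{eq:IG2_cont} and Lemma \ref{thm:sewing1}: the $k$-th integral is $\mathcal{I} A_1^{(k)}$ where $(A_1^{(k)})_{s,s'} = b_{\ep_k}(s,\cdot) * L^w_{s,s'}(F(\mu^k_s))$, and the limit is $\mathcal{I} A_1$ with $(A_1)_{s,s'} = b(s,\cdot) * L^w_{s,s'}(F(\mu_s))$. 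Two things drive the convergence of the germs and hence of their sewings: (a) $b_{\ep_k} \to b$ in the relevant spatial norms ($L^2_x$ and $C^{\gamma_0}_t H^{-1}_x$) as $\ep_k \downarrow 0$, since $b_{\ep_k} = b * J_{\ep_k}$ is a mollification; and (b) $F(\mu^k_s) \to F(\mu_s)$ uniformly in $s$, which follows from $|F(\mu^k_s) - F(\mu_s)| \le |F|_{\rm Lip}\, \WW_1(\mu^k_s,\mu_s)$ together with Lemma \ref{thm:law_convergence}(ii), whose hypotheses are met because $\{\tilde x^k\} \subset_b L^p(\tilde\Omega; C^{\gamma_1/4}_t)$ by Lemma \ref{thm:Cgamma_0_tightness} and $\tilde x^k \to \tilde x$ a.s.\ in $C_t$. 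Because $b(s,\cdot)*L^w_{s,s'}$ is bounded in $W^{1,\infty}_x$ (Corollary \ref{thm:localnondet_cor}), the spatial shift in the evaluation point costs only $|F(\mu^k_s)-F(\mu_s)|$, and the two contributions combine to give $\|(A_1^{(k)})_{s,s'} - (A_1)_{s,s'}\|$ small uniformly in $(s,s')\in\Delta_2$; the coboundary estimates are uniform in $k$ just as in the proof of Lemma \ref{thm:Cgamma_0_tightness}. A stability version of the sewing lemma (or simply the explicit bound $\|\mathcal{I}A - A_{0,\cdot}\|_{C^\beta} \lesssim \|\delta A\|_\beta$ from Lemma \ref{lem:sewing} applied to $A_1^{(k)} - A_1$) then yields $\mathcal{I}A_1^{(k)} \to \mathcal{I}A_1$ uniformly on $[0,T]$. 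The same argument applies verbatim to $A_2^{(k)}$ (built from $a^2_{\ep_k}$, using $|a|^2 \in C^{\gamma_0}_t H^{-1}_x$ from Hypothesis \ref{hyp:main}(i)) and to the germ built from $a_{\ep_k}$ itself.

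For the quadratic and cross terms I would additionally use that $|\tilde M^k(t)|^2 \to |\tilde M(t)|^2$ and $\tilde M^k(t)\tilde\beta^k_t \to \tilde M(t)\tilde\beta_t$, which follow once we know $\tilde M^k(t) \to \tilde M(t)$ a.s.\ (a consequence of $\tilde x^k_t \to \tilde x_t$, $\tilde x^k_0 \to \tilde x_0$, and the convergence of the drift integral just established) and that $\tilde\beta^k_t \to \tilde\beta_t$. To move convergences through $\tilde\EE$ I would justify uniform integrability: the $L^1$-convergence of the integral terms follows from their a.s.\ convergence together with the uniform moment bounds — the sewing bounds \eqref{eq:IG1_cont}, \eqref{eq:IG2_cont} are $\ep$-uniform, giving $\sup_k \tilde\EE |\mathcal{I}A_i^{(k)}(t)|^p < \infty$ — while $\sup_k \tilde\EE \|\tilde x^k\|_{C_t}^p < \infty$ and $\sup_k \tilde\EE \|\tilde\beta^k\|_{C_t}^p < \infty$ (the latter since the $\tilde\beta^k$ are Brownian motions) control the remaining terms, so Vitali's theorem applies. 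The main obstacle I anticipate is precisely item (b) coupled with the sewing stability: one must be careful that the convergence $F(\mu^k_s) \to F(\mu_s)$ is \emph{uniform in} $s \in [0,T]$ (not merely pointwise), since the germs are evaluated at these points and the sewing bounds must hold uniformly over the simplex; Lemma \ref{thm:law_convergence}(ii) is stated exactly to supply this uniformity, so the argument hinges on checking its hypotheses carefully for the Skorokhod representatives. Everything else is a routine continuity-plus-uniform-integrability passage to the limit.
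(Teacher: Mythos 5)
Your proposal is correct and follows essentially the same route as the paper's proof: the difference of germs is split into a mollification error $\|b_{\ep_k}(s)-b(s)\|_{H^{-1}_x}$ and a law-convergence error controlled by $\|b(s)*L^w_{s,t}\|_{W^{1,\infty}_x}\,|F(\mu^k_s)-F(\mu_s)|$ with Lemma \ref{thm:law_convergence}(ii) supplying the uniform-in-$s$ convergence, the coboundary is bounded uniformly in $k$ so the sewing lemma gives uniform convergence of the integrals, and Vitali upgrades the a.s.\ convergence to $L^1$ so the limit passes through $\tilde\EE$. The point you flag as the main obstacle (uniformity in $s$ of $F(\mu^k_s)\to F(\mu_s)$) is exactly what the paper addresses, so no further comment is needed.
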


\begin{proof}

We perform the calculations for the convergence $\tilde{M}^k 
\to \tilde{M}$ giving us the first equation of \eqref{eq:martingale_limit}. 
The remaining limits are analogous. Since the martingale property is stable under almost sure convergence combined with uniform integrability, the limiting processes \( \tilde{M} \), \( \tilde{N} \), and \( \tilde{R} \) remain martingales with respect to the limit filtration.

We focus on the integral term in $\tilde{M}^k$ defined 
by sewing. The convergence $\tilde{x}^k - \tilde{x}^k_0 
    \to \tilde{x}_t - \tilde{x}_0$ 
in $C^{\gamma_1/4}_t$, $\tilde{\PP}$-a.s.~follows 
directly from Proposition \ref{thm:skorokhod}.

By Young's convolution inequality  in $W^{1,\infty}$,
\begin{align*}
&\Big|\int_s^t b(s,F(\mu^k_s) - w_r) \dd r 
    - \int_s^t b(s,F(\mu_s) - w_r) \dd r \Big|\\
& = \big|b(s)*L^w_{s,t} (F(\mu^k_s)) -  b(s)*L^w_{s,t} 
    (F(\mu_s))\big|\\
& \le \|b(s)\|_{L^2_x} \|L^w\|_{C^\gamma_t H^1_x}  |t - s|^{\gamma}
    |F(\mu^k_s) - F(\mu_s)|,
\end{align*}
where we used Corollary \ref{thm:localnondet_cor}. 

With $\mu^k_t$ denoting the law of $\tilde{x}^k_s$ and $\mu_t$
the law of $\tilde{x}$ at $t$, uniformly in $t$,
\begin{align*}
|F(\mu^k_t) - F(\mu_t)| 
&\lesssim \WW_1(\mu^k_t, \mu_t) 
\xrightarrow{\text{Lemma \ref{thm:law_convergence} (ii)}} 0.
\end{align*}

On the other hand, by \cite[Theorem 2.5]{bechtold}, which can 
readily be derived from Lemma \ref{thm:localnondet}, 
\begin{align*}
&\bigg|\int_s^t b_{\ep_k}(s,F(\mu^k_s) - w_r) \dd r 
    - \int_s^t b(s,F(\mu^k_s) - w_r) \dd r\bigg|\\
& = \big|\big(T^{w}_{s,t} b_{\ep_k}(s)\big)(F(\mu^k_s)) 
    - \big(T^{w}_{s,t} b(s)\big)(F(\mu^k_s))\big|\\
& \le  \|L^w\|_{C^\gamma_t H^1_x} |t - s|^{\gamma}
    \underbrace{\|b_{\ep_k}(s) - b(s)\|_{H^{-1}_x}}_{= o_{k \uparrow \infty}(1)}.
\end{align*}

Finally, we must need bound the co-boundary. Setting 
$B^k_{s,t} := \int_s^t b_{\ep_k}(s,F(\mu^k_s) - w_r) \dd r$ 
and using \eqref{eq:1wasserstein_use}, Lemma 
\ref{thm:law_convergence} (i), and the bounded inclusion of the paths 
$\{\tilde{x}^k\} \subset_b L^p(\tilde{\Omega};C^{\gamma_1/2}([0,T]))$ 
guaranteed by Lemma \eqref{thm:Cgamma_0_tightness} and 
the equality of laws in Proposition \ref{thm:skorokhod}, we have $\forall (s,\tau,t)\in\Delta_3$
\begin{align*}
&\big|\delta B_{s,\tau, t}^k\big|
\\& = \big|b_{\ep_k}(s)*L^{w}_{\tau,t}  (F(\mu^k_s)) 
    - b_{\ep_k}(s)*L^{w}_{\tau,t} (F(\mu^k_\tau))\\
&\qquad + b_{\ep_k}(s)*L^{w}_{\tau,t} (F(\mu^k_\tau)) 
    - b_{\ep_k}(\tau)*L^{w}_{\tau,t} (F(\mu^k_\tau))\big|\\
&\le \|b_{\ep_k}(s)\|_{L^2_x} \|L^w_{\tau, t}\|_{H^1_x}
    |F|_{\rm Lip}\WW_1(\mu^k_s, \mu^k_t)
    + \|L^w_{\tau,t}\|_{H^1_x}
    \|b_{\ep_k}(s) - b_{\ep_k}(\tau)\|_{H^{-1}_x}\\
&\le \|b_{\ep_k}(s)\|_{L^2_x}  \|L^w\|_{C^\gamma_t H^1_x} |t - \tau|^\gamma |s - \tau|^{\gamma_1/2}
    +\|L^w\|_{C^\gamma_t H^1_x} |t - \tau|^\gamma |s - \tau|^{\gamma_0}
\end{align*}
Since $\gamma + \gamma_0$, $\gamma + \gamma_1/2 > 1$ (see, e.g.,  
Lemma \ref{thm:sewing1} above), via the sewing lemma, we have a well-defined 
integral. The convergence $\tilde{M}^k - \tilde{M} \to 0$ a.s.~in $C_t$ 
is also assured. By Vitali's convergence theorem, this a.s.~convergence 
can be upgraded to convergence in $L^1_{\tilde{\omega}} C_t$, which 
guarantees the first equation of \eqref{eq:martingale_limit}.
\end{proof}

We now are ready to prove our main theorem on existence of solutions  to DDSDEs \eqref{eq:main intro}
under the assumption of $C^2 \cap W^{1,\infty}$ initial data.

\begin{thm}\label{thm:main-1}
       If Hypothesis \ref{hyp:main} holds, then there exists a weak solution (in the probabilistic sense) to \eqref{eq:main intro}.
\end{thm}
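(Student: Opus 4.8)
The plan is to combine the compactness, Skorokhod and martingale-identification results of Sections \ref{sec:stochastic integration}--\ref{sec:lim} into a single compactness/martingale-problem argument, following the template of \cite{bechtold}. First I would record that, by Lemma \ref{lem:xepsilon}, for every $\ep > 0$ the mollified McKean--Vlasov equation \eqref{eq:mckean-vlasov} has a unique strong solution $x^\ep$ on the original basis $(\Omega, \cF, \{\cF_t\}_{t\in[0,T]}, \PP)$ carrying the driving Brownian motion $\beta$; being a strong solution, $x^\ep$ is adapted to the ($\PP$-augmented) filtration generated by $\beta$, a fact I shall use below to control the limiting Brownian motion. Lemma \ref{thm:Cgamma_0_tightness} supplies, for a suitable $\gamma_1$ with $\gamma_1/2 > (1 + k/2)\zeta_0$, the $\ep$-uniform bounds $\sup_{s\neq t}\EE\,\abs{x^\ep_{s,t}}^p / \abs{t-s}^{p\gamma_1/2} \lesssim_p 1$ and $\EE\,\abs{x^\ep_t}^p \lesssim_p 1$, so that (by Kolmogorov's criterion and the compact embedding $C^{\gamma_1/2}_t \hookrightarrow C^{\gamma_1/4}_t$) the laws of $(x^\ep_0, x^\ep, \beta)$ are tight on $\RR^n \times C^{\gamma_1/4}_t \times C_t$. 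Proposition \ref{thm:skorokhod} then yields, on a new space $\tilde{E}$, representatives $\tilde{X}_k = (\tilde{x}_0^k, \tilde{x}^k, \tilde{\beta}^k) \sim (x_0^{\ep_k}, x^{\ep_k}, \beta)$ converging $\tilde\PP$-a.s.\ in $\mathcal{X}$ to $\tilde{X} = (\tilde{x}_0, \tilde{x}, \tilde{\beta})$; I equip $\tilde E$ with the filtrations \eqref{eq:filtrations} and their limit $\tilde{\cF}_t := \Sigma(\{\tilde{X}(s) : s\le t\}\cup\mathcal{N})$.

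Next I would verify the three facts that turn $\tilde{X}$ into a weak solution. \emph{(a) $\tilde\beta$ is an $\{\tilde{\cF}_t\}$-Brownian motion.} Since $x^{\ep_k}$ is $\beta$-adapted, $\beta$ is a Brownian motion with respect to the filtration generated by $(x^{\ep_k},\beta)$; as this is a property of the joint law of $(x^{\ep_k},\beta)$, and that law equals the law of $(\tilde x^k,\tilde\beta^k)$, the process $\tilde\beta^k$ is an $\{\tilde{\cF}^k_t\}$-Brownian motion. This property — encoded through independence of increments and moment identities, both stable under a.s.\ convergence — is inherited by $\tilde\beta$ with respect to $\{\tilde{\cF}_t\}$. \emph{(b) $\mu_s = \mathcal{L}(\tilde{x}_s)$ for every $s$.} Indeed $\mu^k_s = \mathcal{L}(\tilde{x}^k_s) = \mathcal{L}(x^{\ep_k}_s)$ converges weakly to $\mathcal{L}(\tilde{x}_s)$ by the a.s.\ convergence $\tilde{x}^k_s\to\tilde{x}_s$, and in $\WW_1$ to $\mu_s$ by Lemma \ref{thm:law_convergence}(ii), so the two limits agree. \emph{(c) The martingale structure.} By the lemma preceding Lemma \ref{lem:limit}, the processes $\tilde{M}^k, \tilde{N}^k, \tilde{R}^k$ of \eqref{eq:martingale_approx} are $\{\tilde{\cF}^k_t\}$-martingales (a consequence of equality of laws), and Lemma \ref{lem:limit} passes this to the limit: $\tilde{M}, \tilde{N}, \tilde{R}$ are $\{\tilde{\cF}_t\}$-martingales, all time integrals being interpreted via the sewings of Lemma \ref{thm:sewing1}.

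It then remains to read off the weak-solution property. The three martingale relations say precisely that $\tilde{M}_t = \tilde{x}_t - \tilde{x}_0 - \mathcal{I}\big(\int_s^{s'} b(s, F(\mu_s) - w_r)\dd r\big)_{0,t}$ is a continuous $\{\tilde{\cF}_t\}$-martingale with $\sum_i \langle \tilde M^i\rangle_t = \mathcal{I}\big(\int_s^{s'} \sum_{i,j} a_{ij}^2(s, F(\mu_s) - w_r)\dd r\big)_{0,t}$ (from $\tilde R$) and $\langle \tilde M^i, \tilde\beta^l\rangle_t = \mathcal{I}\big(\int_s^{s'} a_{il}(s, F(\mu_s) - w_r)\dd r\big)_{0,t}$ (from $\tilde N$); since also $\langle\tilde\beta^l, \tilde\beta^m\rangle_t = \delta_{lm}t$, a Kunita--Watanabe computation shows that the bracket of $Z^i := \tilde M^i - \sum_l \int_0^\cdot a_{il}(s, F(\mu_s) - w_s)\dd\tilde\beta^l_s$ satisfies $\sum_i\langle Z^i\rangle_t = \sum_i\langle\tilde M^i\rangle_t - \mathcal{I}\big(\int_s^{s'}\sum_{i,l} a_{il}^2\dd r\big)_{0,t} = 0$ — the stochastic integral against $\tilde\beta$ being defined by $L^2$-approximation through the generalised It\^o isometry of Section \ref{sec:stochastic integration} — whence each $Z^i\equiv 0$. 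Therefore $\tilde x$ satisfies \eqref{eq:transformed} on $\tilde E$ in the regularised sense, with $\mu_s = \mathcal{L}(\tilde x_s)$ by (b) and $\tilde\beta$ a Brownian motion by (a): this is the sought weak solution. Finally, $\tilde x \in L^1_{\tilde\omega} C^{\gamma_1/4}_t$ follows from the uniform bound \eqref{eq:xep_tightness} and the a.s.\ convergence in $C^{\gamma_1/4}_t$ via Fatou's lemma.

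I expect the genuine obstacle to be bookkeeping rather than any single deep step. Two approximations must be reconciled \emph{simultaneously and uniformly in $k$} when passing to the limit inside the sewn integrals: the mollification $b_{\ep_k}\to b$ in $C^{\gamma_0}_t H^{-1}_x$ and the law-convergence $F(\mu^k_s)\to F(\mu_s)$ coming from $\WW_1$-convergence; this is the content of Lemma \ref{lem:limit}, whose co-boundary estimate must stay summable (an admissible $\gamma$ with $\gamma + \gamma_0 > 1$ and $\gamma + \gamma_1/2 > 1$ must exist) uniformly along the sequence. A second delicate point is that the limiting filtration must be rich enough for $\tilde\beta$ to remain a Brownian motion while keeping $\tilde x$ adapted. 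The degeneracy of $a$, which would ordinarily obstruct recovering the driving Brownian motion from the quadratic variation alone, causes no difficulty here: the auxiliary martingale $\tilde N$ — equivalently the cross-bracket $\langle\tilde M,\tilde\beta\rangle$ — pins $\tilde M$ down as $\int a\,\dd\tilde\beta$ directly, with no need to enlarge the space.
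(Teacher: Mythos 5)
Your proposal follows essentially the same route as the paper's proof: existence of mollified solutions via Lemma \ref{lem:xepsilon}, the uniform H\"older/moment bounds of Lemma \ref{thm:Cgamma_0_tightness} giving tightness, the Skorokhod representation of Proposition \ref{thm:skorokhod}, and the martingale identification of Lemma \ref{lem:limit}. The only difference is that you spell out the final steps the paper leaves implicit --- that $\tilde\beta$ remains a Brownian motion for the limit filtration, that $\mu_s=\mathcal{L}(\tilde x_s)$, and the Kunita--Watanabe computation recovering $\tilde M=\int a\,\dd\tilde\beta$ from the three martingale relations --- and these are carried out correctly.
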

\begin{proof}
With the detailed results from Section \ref{sec:stochastic integration}, \ref{sec:tightness}, and \ref{sec:lim} at hand, it is sufficient to outline the steps of the proof here and refer to the rigorous statement in the corresponding places.

Following from Lemma \ref{lem:xepsilon}, we know that there exists a unique solution $x^\epsilon$ to the equation \eqref{eq:mckean-vlasov} which is the approximation equation of  \eqref{eq:main intro}. 
In this way  we get a sequence of solutions $(x^\epsilon, \beta,\PP)$ ($\beta$ is a Brownian motion on the filtered probability space ($\Omega,\mathcal{F}, \PP$)) which furthermore is dense by Lemma \ref{thm:Cgamma_0_tightness}. Hence for such sequence we get from Proposition \ref{thm:skorokhod} that there exists a subsequence $(x^{\epsilon_k}, \beta^k, \tilde{\PP})$ ($\beta^k$ is some Brownian motion on the filtered probability space ($\tilde\Omega, \tilde{\mathcal{F}}, \tilde\PP$)) so that as $k\rightarrow\infty$, $\tilde X_k=(x_0^{\epsilon_k}, x^{\epsilon_k},\beta^{k}, \tilde \PP)$ converges in law to $X=(x_0, x,\tilde\beta, \tilde \PP)$, which is shown  from Lemma \ref{lem:limit} to be a weak solution to  \eqref{eq:main intro}.
\end{proof}

\newcommand{\etalchar}[1]{$^{#1}$}

\end{document}